\documentclass{article}
%%%%%%%%%%%%%%%%%%%%%%%%%%%%%% PACKAGES

\pagestyle{headings}

\usepackage[top=1in,bottom=1in,left=1.25in,right=1.25in]{geometry}
\usepackage{graphicx}
\usepackage{caption}
\usepackage{subcaption}
\usepackage{float} %Para colocar las imágenes y tablas donde quiera
\usepackage{lmodern}
\usepackage[T1]{fontenc}
\usepackage[english,activeacute]{babel}
\usepackage{mathtools}
\usepackage{amsmath,amsthm,amssymb,amsfonts}
\usepackage{mathrsfs}
\usepackage{stmaryrd} %Para usar \owedge y \obar
\usepackage[noadjust]{cite} %Para añadir bibliografía
\usepackage{enumerate}
\usepackage{verbatim} %Para comentar varias líneas
\usepackage[usenames,dvipsnames,svgnames,table]{xcolor}
\usepackage[utf8]{inputenc} %Para poner las tildes normales
\usepackage{soul} %Para poder usar \hl=subrayar de amarillo
\usepackage{tikz-cd} %Para hacer diagramas
\usetikzlibrary{babel} %Para que funcione tikzcd con babel
\usepackage{setspace} %Para el espacio de las filas de la tabla
\usepackage{array, multicol, multirow, makecell}
\usepackage{subfiles}
\usepackage[nottoc]{tocbibind} %Para añadir bibliografía al índice
\usepackage{longtable}
\usepackage{tikz}
\usetikzlibrary{positioning}
\usepackage{dynkin-diagrams}
\usepackage{authblk}
\usepackage{abstract}
\usepackage{bbm}

\usepackage[pdftex,backref=page]{hyperref} %Para poner hipervínculos

\hypersetup{linktocpage=true,colorlinks=true,citecolor=blue,urlcolor=blue}

%Para añadir imágenes de Inkscape
\usepackage{import}
\usepackage{xifthen}
\usepackage{pdfpages}
\usepackage{transparent}

\newcommand{%
	
	\import{./figures/}{.pdf_tex}
}[1]{%
	
	\import{./figures/}{#1.pdf_tex}
}

\let\OLDthebibliography\thebibliography
\renewcommand\thebibliography[1]{
	\OLDthebibliography{#1}
	\setlength{\parskip}{0pt}
}

\setlength{\parindent}{0cm} %Para quitar la sangría
%\numberwithin{equation}{section}

%%%%%%%%%%%%%%%%%%%%%%%%%%%%%% THEOREMS

\newtheorem{theorem}{Theorem}[section]
\newtheorem{corollary}[theorem]{Corollary}
\newtheorem{proposition}[theorem]{Proposition}

\theoremstyle{definition}
\newtheorem{definition}[theorem]{Definition}

\newtheorem{remark}[theorem]{Remark}

%%%%%%%%%%%%%%%%%%%%%%%%%%%%%% COMMANDS

%Funciones
\DeclareMathOperator{\Id}{Id}
\DeclareMathOperator{\spn}{span}

\DeclareMathOperator{\Aut}{Aut}

\DeclareMathOperator{\Ham}{Ham}
\DeclareMathOperator{\Heis}{Heis}

\DeclareMathOperator{\CH}{\mathbb{C}\mathrm{H}}
\DeclareMathOperator{\HH}{\mathbb{H}\mathrm{H}}

%Símbolos matemáticos

\newcommand{\Z}{\mathbb{Z}}

\newcommand{\R}{\mathbb{R}}
\newcommand{\C}{\mathbb{C}}

\newcommand{\calC}{\mathcal{C}}

\newcommand{\calF}{\mathcal{F}}

\newcommand{\calH}{\mathcal{H}}

\newcommand{\calK}{\mathcal{K}}
\newcommand{\calL}{\mathcal{L}}

\newcommand{\calV}{\mathcal{V}}

\newcommand{\frg}{\mathfrak{g}}

\renewcommand{\Re}{\operatorname{Re}}
\renewcommand{\Im}{\operatorname{Im}}

\newcommand{\hor}{\mathrm{H}}
\newcommand{\ver}{\mathrm{V}}
\newcommand{\rmQ}{\mathrm{Q}}

\newcommand{\GL}{\mathrm{GL}}

\newcommand{\SU}{\mathrm{SU}}
\newcommand{\Sp}{\mathrm{Sp}}

\newcommand{\aut}{\mathfrak{aut}}
\newcommand{\heis}{\mathfrak{heis}}

\renewcommand{\sslash}{/\mkern-5mu/}

\newcommand{\pd}[2]{\frac{\partial #1}{\partial #2}}
\renewcommand{\d}{\mathrm{d}}

\DeclarePairedDelimiter{\abs}{\lvert}{\rvert}

\DeclarePairedDelimiter{\escal}{\langle}{\rangle}

\allowdisplaybreaks

%%%%%%%%%%%%%%%%%%%%%%%%%%%%%%%%%%%%%%%%

\title{\textbf{Symmetries of one-loop deformed q-map spaces}}
\author{Vicente Cortés, Alejandro Gil-García, and Danu Thung}
\affil{\normalsize Fachbereich Mathematik\\
	Universit\"at Hamburg\\
	Bundesstra\ss e 55, 20146 Hamburg, Germany\\
	vicente.cortes@uni-hamburg.de, alejandro.gil.garcia@uni-hamburg.de}
\date{\today}

\begin{document}
	
    \maketitle
	
    \begin{abstract}
    
    Q-map spaces form an important class of quaternionic K\"ahler manifolds of negative scalar curvature. Their one-loop deformations are always inhomogeneous and have been used to construct cohomogeneity one quaternionic K\"ahler manifolds as deformations of homogeneous spaces. Here we study the group of isometries in the deformed case. Our main result is the statement that it always contains a semidirect product of a group of affine transformations of $\mathbb{R}^{n-1}$ with a Heisenberg group of dimension $2n+1$ for a q-map space of dimension $4n$. The affine group and its action on the normal Heisenberg factor in the semidirect product depend on the cubic affine hypersurface which encodes the q-map space.\bigskip
    
    \emph{Keywords: quaternionic K\"ahler manifolds, special geometry, isometry groups}\medskip
    
    \emph{MSC classification: 53C26}
    
    \end{abstract}

    \clearpage

    \tableofcontents
	
    \section{Introduction}

    Quaternionic K\"ahler manifolds are Riemannian $4n$-manifolds $(n>1)$ whose holonomy group is contained in $\Sp(n)\Sp(1)$, hence they are Einstein. In this paper we are interested in a class of quaternionic K\"ahler manifolds of negative scalar curvature arising from physics, namely \emph{supergravity c-map spaces}. The construction of these spaces have its origin in supergravity. It was shown in \cite{FS90} (see also \cite{Hit09}) that to any projective special K\"ahler manifold $\bar{M}$ of (real) dimension $2n-2$ one can associate a quaternionic K\"ahler manifold $\bar{N}$ of dimension $4n$. This corresponds to dimensional reduction of $\mathcal{N}=2$ supergravity coupled to vector multiplets from 4 to 3 space-time dimensions. It was moreover shown in \cite{RSV06} that the supergravity c-map metric (sometimes referred to as the Ferrara-Sabharwal metric) admits a one-parameter deformation by quaternionic K\"ahler metrics of negative scalar curvature. This metric is known as the \emph{one-loop deformed supergravity c-map metric} (this deformation can be interpreted as a perturbative quantum correction in the string coupling in type IIB string theory).\medskip
    
    A mathematical proof that the (one-loop) deformed supergravity c-map metric is indeed quaternionic K\"ahler of negative scalar curvature can be described as the following two-step process. Given a projective special K\"ahler manifold $\bar M$, one has a conical affine special K\"ahler manifold $M$ on a $\mathbb{C}^*$-bundle over $\bar M$. The cotangent bundle $N=T^*M$ of $M$ is a pseudo-hyper-K\"ahler manifold \cite{CFG89,ACD02} equipped with a rotating circle symmetry \cite{ACM13}, so one can apply the HK/QK correspondence \cite{Hay08,ACM13} to obtain a quaternionic K\"ahler manifold $\bar N$. Finally, one checks that the metric obtained by this method is the deformed supergravity c-map metric \cite{ACDM15}. Summarizing we have the following diagram:

    $$\begin{tikzcd}
M^{2n} \arrow[rrr, "\text{rigid c-map}"]                                          &  &  & N^{4n} \arrow[d, "\text{HK/QK}"] \\
\bar{M}^{2n-2} \arrow[rrr, "\text{supergravity c-map}"] \arrow[u, "\mathbb{C}^*"] &  &  & \bar{N}^{4n}
    \end{tikzcd}$$

    Therefore the deformed supergravity c-map produces a one-parameter family of quaternionic K\"ahler metrics (depending on a real parameter $c\in\R$) from a projective special K\"ahler manifold. Note that the case $c=0$ is the Ferrara-Sabharwal metric. For a fixed projective special K\"ahler manifold, the metrics in the image of the deformed supergravity c-map are locally isometric for different values of $c>0$ \cite{CDS17}. Moreover, any deformed supergravity c-map space is non-locally homogeneous \cite{CGS23}.\medskip
    
    There is a special class of supergravity c-map spaces known as \emph{supergravity q-map spaces}. These spaces arise as the composition of the supergravity r-map and the supergravity c-map. The \emph{supergravity r-map} produces a projective special K\"ahler manifold $\bar{M}$ of (real) dimension $2n-2$ from a projective special real manifold $\calH$ of dimension $n-2$. This construction was introduced in \cite{dWVP92} and it is induced by dimensional reduction of $\mathcal{N}=2$ supergravity theories from 5 to 4 space-time dimensions. Therefore, applying the supergravity q-map to a projective special real manifold $\calH$ of dimension $n-2$, we obtain a quaternionic K\"ahler manifold of dimension $4n$. Since the supergravity c-map admits a one-parameter deformation, so does the supergravity q-map. It is known that for any $c\geq0$, a supergravity q-map space is complete provided that the projective special real manifold is complete \cite{CDS17}. From the resolution of the Alekseevsky conjecture \cite{BL23} we know that all homogeneous quaternionic K\"ahler manifolds of negative scalar curvature are Alekseevsky spaces \cite{Ale75,Cor96}, that is admit a simply transitive solvable Lie group of isometries. Moreover, except $\HH^n$ and $\SU(n,2)/\mathrm{S}(\mathrm{U}(n)\times\mathrm{U}(2))$, all of these spaces are in the image of the supergravity q-map \cite{dWVP92}.\medskip
    
    The purpose of this paper is to determine explicitly a universal Lie group acting effectively and isometrically on any deformed supergravity q-map space $(\bar{N},g_{\bar{N}}^c)$ for $c\geq0$, see Theorem~\ref{thm:effective_action}. For the undeformed case $c=0$ the group was described in \cite[Appendix~A]{CDJL21}, compare with \cite{dWVVP93,dWVP95}.\medskip
    
    The only space that is in the image of the supergravity c-map but is not obtained as a supergravity q-map space is $(\bar{N}=\SU(n,2)/\mathrm{S}(\mathrm{U}(n)\times\mathrm{U}(2)),g_{\bar{N}}^c)$. For this particular case, a Lie group playing a similar role for supergravity c-map spaces (rather than supergravity q-map spaces) acting effectively and isometrically was obtained in \cite{CRT21} by integrating the corresponding Killing vector fields.\medskip
    
    In this paper we directly work on the level of group actions rather than Lie algebra actions. First we recall how the group $\Aut(M)\ltimes\R^{2n}$, $n=\dim_\C M$, acts globally on the rigid c-map space $N=T^*M$, see Propositions~\ref{translations:prop} and \ref{prop:semidirect_Aut-R2n}. Then, we describe how to lift this action to an action of $\Aut(M)\ltimes\Heis_{2n+1}$ on the trivial circle bundle $P$ over $N$ that recovers the known infinitesimal action described in \cite{CST21}, see~\eqref{eq:action_P} and Proposition~\ref{prop:inf_group_action}. Finally, when the conical affine special K\"ahler manifold $M$ is determined by a projective special real manifold $\calH$, we show that the subgroup $$\big((\R_{>0}\times\Aut(\calH))\ltimes\R^{n-1}\big)\ltimes(\Heis_{2n+1}/\calF),$$ where $\calF$ is an infinite cyclic subgroup of the Heisenberg center, acts isometrically and effectively on the quaternionic K\"ahler manifold $\bar{N}\subset P$ viewed as a hypersurface of the circle bundle $P$, yielding our main result. Here $\Aut(\calH)$ denotes the automorphism group of the projective special real manifold $\calH$. The cyclic group $\calF$ included to ensure effectiveness can be removed by considering the universal covering of the quaternionic K\"ahler manifold, which amounts to replacing the circle bundle $P$ by an $\mathbb{R}$-bundle.\medskip
    
    \textbf{Notation:} We will use the Einstein summation convention throughout the text for the computations in coordinates.
    
    \subsubsection*{Acknowledgements}
    
    V.\ C.\ and A.\ G.\ are supported by the German Science Foundation (DFG) under Germany’s Excellence Strategy – EXC 2121 “Quantum Universe” – 390833306.
    
    \section{Symmetries of rigid c-map spaces}\label{sec:sym_rigid}
 
    In this section we recall some relevant background material. In particular, we recall the definition and some properties of conical affine special K\"ahler manifolds, as well as the rigid c-map construction, which assigns a pseudo-hyper-K\"ahler manifold with a rotating Killing vector field to any conical affine special K\"ahler manifold. Moreover, we review how to construct Hamiltonian automorphisms of the rigid c-map structure.
    
    \subsection{Special K\"ahler manifolds}
	
	\begin{definition}
		An \emph{affine special K\"ahler (ASK) manifold} $(M,g,J,\nabla)$ is a (pseudo-) K\"ahler manifold $(M,g,J)$ endowed with a flat torsion-free connection $\nabla$ such that $\nabla\omega=0$ and $\d^\nabla J=0$, where $\omega=gJ:=g(J\cdot,\cdot)$ denotes the K\"ahler form.
	\end{definition}
 
        \begin{definition}
		A \emph{conical affine special K\"ahler (CASK) manifold} $(M,g,J,\nabla,\xi)$ is an ASK manifold $(M,g,J,\nabla)$ endowed with a complete vector field $\xi$, called the \emph{Euler vector field}, such that \begin{itemize}
			\itemsep 0em
			\item $g$ is negative-definite on $\spn\{\xi,J\xi\}$ and positive-definite on its orthogonal complement,
			\item $D\xi=\nabla\xi=\Id$, where $D$ is the Levi-Civita connection of $g$.
		\end{itemize} Moreover, $M$ is endowed with a principal $\C^*$-action generated by $\xi$ and $J\xi$.
	\end{definition}

	We collect in the following proposition some well-known facts about the interaction between the vector fields $\xi,J\xi$ and the affine special K\"ahler structure (see e.g.\ \cite[Proposition~3]{CM09} and \cite[page~652]{CHM12} for a proof).
	
	\begin{proposition}\label{prop:basis_CASK}
		Let $(M,g,J,\nabla,\xi)$ be a CASK manifold. Then \begin{enumerate}[\normalfont(a)]
			\itemsep 0em
			\item The vector field $\xi$ is holomorphic and homothetic: $\mathcal L_\xi g = 2g$.
			\item The vector field $J\xi$ is holomorphic and Killing.
			\item The function $f=\frac{1}{2}g(\xi,\xi)$ is a K\"ahler potential for $g$ and a Hamiltonian function for $J\xi$, where our convention is $\d f=-\omega(J\xi,\cdot)$.
		\end{enumerate}
	\end{proposition}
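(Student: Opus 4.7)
The plan is to prove (a)--(c) by direct computation, exploiting the Kähler identities $DJ = 0$ and $Dg = 0$ together with the defining condition $D\xi = \Id$. I would work exclusively with the Levi-Civita connection $D$ (not with the flat connection $\nabla$, which is neither metric nor complex), so that the formula $D(J\xi) = J \circ D\xi = J$ is immediately available and the Lie derivatives can be rewritten via a torsion-free connection.

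For (a), the homothetic identity falls out at once from the torsion-free Lie derivative formula:
\[
(\mathcal L_\xi g)(X,Y) = g(D_X\xi, Y) + g(X, D_Y\xi) = 2g(X,Y).
\]
For the holomorphicity of $\xi$, I would expand $(\mathcal L_\xi J)(X) = [\xi, JX] - J[\xi, X]$ using $[\xi, Y] = D_\xi Y - Y$; both brackets reduce to $J(D_\xi X) - JX$ after applying $DJ = 0$, so the difference vanishes.

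Part (b) is essentially the same computation with $\xi$ replaced by $J\xi$. The Killing condition becomes
\[
(\mathcal L_{J\xi} g)(X,Y) = g(JX, Y) + g(X, JY) = 0,
\]
which is the antisymmetry of $\omega = gJ$. For holomorphicity, the same bracket expansion as in (a) --- now using $D(J\xi) = J$ together with $J^2 = -\Id$ --- again cancels out and yields $\mathcal L_{J\xi} J = 0$.

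Part (c) is a matter of one derivative and then a second. Since $\d f(X) = g(D_X\xi, \xi) = g(X,\xi)$ and $-\omega(J\xi, X) = -g(J^2\xi, X) = g(\xi, X)$, the Hamiltonian identity is immediate. For the Kähler potential statement, I would differentiate once more: from $d^c f(X) = -\d f(JX) = g(J\xi, X)$ and $D(J\xi) = J$, a short skew-symmetrization using $Dg = 0$ gives $\d d^c f = 2\omega$, which is the Kähler potential identity in its real formulation. No step is genuinely hard; the only thing requiring care is the bookkeeping of the sign conventions $\omega(X,Y) = g(JX,Y)$ and $\d f = -\omega(J\xi,\cdot)$ used in the statement, so that the identifications in (c) come out with the correct signs.
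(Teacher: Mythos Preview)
Your argument is correct and is the standard direct computation: each step follows from the K\"ahler identities $Dg=0$, $DJ=0$ together with $D\xi=\Id$, and your sign bookkeeping in (c) matches the paper's conventions $\omega=g(J\cdot,\cdot)$ and $\d f=-\omega(J\xi,\cdot)$. The paper itself does not supply a proof of this proposition but simply refers to \cite[Proposition~3]{CM09} and \cite[page~652]{CHM12}; your write-up is precisely the kind of elementary verification those references contain, so there is nothing to compare.
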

 
    Given a CASK manifold $(M,g,J,\nabla,\xi)$, the quotient space $\bar{M}:=M/\C^*$ inherits a (positive-definite) K\"ahler metric $\bar{g}$ and $(\bar{M},\bar{g})$ is called a \emph{projective special K\"ahler (PSK) manifold}. The metric is obtained by K\"ahler reduction exploiting the fact that $J\xi$ is a Hamiltonian Killing vector field. Indeed, $$\bar{M}=M\sslash S^1=f^{-1}(-\tfrac{1}{2})/S^1$$ is the K\"ahler quotient of the CASK manifold $M$ by the $S^1$-action generated by $J\xi$.\medskip
    
    Following \cite{CST21} we introduce the natural notion of symmetry in this setting.
	
	\begin{definition}\textcolor{white}{}
		\begin{itemize}
			\itemsep 0em
			\item An \emph{automorphism} of a CASK manifold $(M,g,J,\nabla,\xi)$ is a diffeomorphism of $M$ which preserves $g$, $J$, $\nabla$ and $\xi$.
			\item An \emph{automorphism} of a PSK manifold $\bar{M}=M\sslash S^1$ is a diffeomorphism of $\bar{M}$ induced by an automorphism of the CASK manifold $M$.
		\end{itemize}
	\end{definition}

	The corresponding groups of automorphisms are denoted by $\Aut(M)$ and $\Aut(\bar M)$, respectively. At the infinitesimal level we have the following notion.
	
	\begin{definition}\textcolor{white}{}
		\begin{itemize}
			\itemsep 0em
			\item An \emph{infinitesimal automorphism} of a CASK manifold $(M,g,J,\nabla,\xi)$ is a vector field $X\in\Gamma(TM)$ such that its local flow preserves the CASK data on $M$. The Lie algebra of such vector fields is denoted by $\aut(M)$.
			\item An \emph{infinitesimal automorphism} of a PSK manifold is a vector field $\bar{X}$ induced by an infinitesimal automorphism of the corresponding CASK manifold (which always projects since it commutes with $\xi$ and $J\xi$). The corresponding Lie algebra is denoted by $\aut(\bar{M})$.
		\end{itemize}
	\end{definition}

	It was shown in \cite[Proposition~2.18]{CST21} that $\aut(M)$ and $\aut(\bar{M})$ are isomorphic when the Levi-Civita connection $D$ and the special connection $\nabla$ are not equal.
	
	\subsection{The rigid c-map}
	
	Let us consider a CASK manifold $(M,g,J,\nabla,\xi)$. We will discuss the natural geometric structure that exists on its cotangent bundle $N=T^*M$. For this, it will be useful to recall some basic facts about vector bundles (see e.g.\ \cite{MS22}).\medskip
	
	Let $\pi:E\longrightarrow M$ be a vector bundle over a manifold $M$. We can pull $E$ back to a bundle over its total space: $\pi^*E\longrightarrow E$. This bundle always admits a tautological section $\Phi\in\Gamma(\pi^*E)$, which assigns to the point $e\in E$ the value $e$. In the case where $E=T^*M$, this is precisely (one interpretation of) the tautological one-form $\lambda$. The tangent vectors to the fibers of $E$ determine a canonical vertical distribution $T^\ver E\subset TE$, which is moreover canonically isomorphic to $\pi^*E$. The corresponding isomorphism is denoted by $$\calV:\pi^*E\longrightarrow T^\ver E.$$
	
	In the case where $E=T^*M$, local coordinates $\{q^j\}$ on $M$ induce canonical coordinates $\{q^j,p_j\}$ on $T^*M$ and the isomorphism $\calV:\pi^*(T^*M)\longrightarrow T^\ver(T^*M)$ is implemented by mapping $\pi^*(\d q^j)$ to $\pd{}{p_j}$.\medskip
	
	Now assume that $E$ comes equipped with some connection $\nabla$. Then we may use the pullback connection on $\pi^*E$ to compute $(\pi^*\nabla)\Phi$. The assignment $X\longmapsto(\pi^*\nabla)_X\Phi$, where $X\in\Gamma(TE)$, then provides a left inverse to $\calV$. Thus, a vector field $X$ is determined by $(\pi^*\nabla)_X\Phi\in\Gamma(\pi^*E)$ and $\pi_*(X)\in\Gamma(\pi^*(TM))$. This is just another way of phrasing the fact that a connection on $E$ induces a splitting $TE\cong\pi^*(TM)\oplus\pi^*E$. In particular, for $E=T^*M$ we obtain the splitting $T(T^*M)\cong\pi^*(TM)\oplus\pi^*(T^*M)$. With respect to this splitting, we may define the following tensor fields: \begin{equation}\label{eq:rigid_c-map_structure}
		g_N=\begin{pmatrix}
			g&0\\
			0&g^{-1}
		\end{pmatrix},\qquad I_1=\begin{pmatrix}
			J&0\\
			0&J^*
		\end{pmatrix},\qquad I_2=\begin{pmatrix}
			0&-\omega^{-1}\\
			\omega&0
		\end{pmatrix},\qquad I_3=I_1I_2.
	\end{equation}

	It is well-known that these tensor fields define a pseudo-hyper-K\"ahler structure on $N=T^*M$ if and only if $(M,g,J,\nabla)$ is an ASK manifold \cite{CFG89,ACD02}. Furthermore, the conical structure gives rise to the vector field $Z:=-\widetilde{J\xi}$, where the tilde denotes the horizontal lifting with respect to $\nabla$ (i.e.\ $(\pi^*\nabla)_Z\lambda=0$ where $\lambda$ is the tautological one-form). This vector field is Killing and $\omega_1$-Hamiltonian, but satisfies $\calL_Z\omega_2=\omega_3$ and $\calL_Z\omega_3=-\omega_2$ \cite[Proposition~2]{ACM13}. It is therefore called a \emph{rotating} Killing field, and the circle action it generates is called rotating as well. The Hamiltonian function for $Z$ with respect to $\omega_1$ is denoted by $f_Z^c$, and is given by $f_Z^c=-\frac{1}{2}g_N(Z,Z)-\frac{1}{2}c$, $c\in\R$.\medskip
	
	Taken together, the above hyper-K\"ahler structure with rotating circle action is referred to as the \emph{rigid c-map structure} on $N=T^*M$ and $(N,g_N,I_1,I_2,I_3)$ as the \emph{rigid c-map space}.
	
	\begin{definition}
		A diffeomorphism $\varphi:N\longrightarrow N$ is called an \emph{automorphism of the rigid c-map structure}, or equivalently of the hyper-K\"ahler structure with rotating circle action, if it preserves $g_N$, $I_1$, $I_2$, $I_3$ and $f_Z^c$.
	\end{definition}

	Note that an automorphism in the above sense automatically commutes with the rotating circle action. The group of all automorphisms of the rigid c-map structure is denoted by $\Aut_{S^1}(N)$. The subgroup of $\omega_1$-Hamiltonian automorphisms is denoted by $\Ham_{S^1}(N)$.\medskip
	
	Given a hyper-K\"ahler structure with rotating circle action, there is a canonical, closed two-form $\omega_\hor:=\omega_1+\d\iota_Zg_N$ of type (1,1) with respect to each $I_k$ \cite[Lemma~2.7]{CST22} that plays a central role in the following. Note that any vector field which is $\omega_1$-Hamiltonian and preserves $Z$ and $g_N$ is automatically $\omega_\hor$-Hamiltonian as well. This applies, in particular, to the rotating Killing field $Z$, whose Hamiltonian with respect to $\omega_\hor$ we denote by $f_\hor^c$, and is given by $f_\hor^c=f_Z^c+g_N(Z,Z)=\frac{1}{2}g_N(Z,Z)-\frac{1}{2}c$. In the following, we will focus on Hamiltonian functions with respect to $\omega_\hor$ rather than $\omega_1$.\medskip
	
	There are two important sources of Hamiltonian automorphisms of the rigid c-map structure: canonical lifts of CASK automorphisms and translations in the fibers.\medskip
	
	For completeness of the exposition we mention that in the case of the rigid c-map the pre-symplectic form $\omega_\hor$ is in fact symplectic.
	
	\begin{proposition}
		Let $(N,g_N,I_1,I_2,I_3)$ be a rigid c-map space. Then the canonical two-form $\omega_\hor=\omega_1+\d\iota_Zg_N$ associated with the rotating Killing field $Z$ is a symplectic structure.
	\end{proposition}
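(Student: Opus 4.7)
The plan is to reduce the claim to a direct computation: since $\omega_\hor=\omega_1+d\iota_Z g_N$ is automatically closed ($\omega_1$ is the K\"ahler form associated with $I_1$ and the correction term is exact), it suffices to prove non-degeneracy. My strategy is to compute $d\iota_Z g_N$ explicitly and then read off $\omega_\hor$ in the natural block decomposition $TN\cong\pi^*(TM)\oplus\pi^*(T^*M)$ that underlies~\eqref{eq:rigid_c-map_structure}.

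The key observation is that because $Z=-\widetilde{J\xi}$ is horizontal, the one-form $\iota_Z g_N=g_N(Z,\cdot)$ kills the vertical distribution and on the horizontal lift of $X\in\Gamma(TM)$ takes the value $-g(J\xi,X)=-\omega(\xi,X)$. Thus $\iota_Z g_N=-\pi^*\iota_\xi\omega$. Next I would combine $d\omega=0$ with Proposition~\ref{prop:basis_CASK}(a) (which gives $\mathcal{L}_\xi g=2g$) and the holomorphicity of $\xi$ to conclude that $\mathcal{L}_\xi\omega=2\omega$; the Cartan magic formula then yields
\begin{equation*}
    d\iota_Z g_N=-\pi^*d\iota_\xi\omega=-\pi^*\mathcal{L}_\xi\omega=-2\pi^*\omega,
\end{equation*}
so that $\omega_\hor=\omega_1-2\pi^*\omega$.

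To finish, I would analyze this form in the block decomposition of~\eqref{eq:rigid_c-map_structure}. There $\omega_1$ is block-diagonal, with horizontal block sending $(\widetilde X,\widetilde Y)$ to $\omega(X,Y)$ and vertical block given by $g^{-1}J^*$, both non-degenerate. The correction $\pi^*\omega$ vanishes as soon as one argument is vertical and reproduces $\omega(X,Y)$ on horizontal lifts, so $\omega_\hor$ stays block-diagonal, with vertical block unchanged and horizontal block equal to $-\omega(X,Y)$. Both blocks remain non-degenerate, hence $\omega_\hor$ is symplectic. The only delicate point in this argument is to verify that the coefficient appearing in $d\iota_Z g_N=-2\pi^*\omega$ is not $-1$, since then the horizontal block would collapse; this is guaranteed precisely by the CASK axiom $\mathcal{L}_\xi g=2g$, that is, the homothety scale being $2$ rather than $1$, which is a feature of the rigid c-map setting with no counterpart for arbitrary hyper-K\"ahler manifolds equipped with a rotating circle action.
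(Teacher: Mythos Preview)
Your proof is correct and follows essentially the same route as the paper: both compute $d\iota_Z g_N=-2\pi^*\omega$, then read off $\omega_\hor$ in the block decomposition $TN\cong\pi^*(TM)\oplus\pi^*(T^*M)$ as $\begin{pmatrix}-\omega&0\\0&\omega^{-1}\end{pmatrix}$, which is manifestly non-degenerate. The only cosmetic difference is in deriving the key identity: the paper uses $D(J\xi)=J$ (from the CASK axiom $D\xi=\Id$ and K\"ahlerness) to compute $D\alpha=-\omega$ for $\alpha=-\iota_{J\xi}g$ and then antisymmetrizes, whereas you use Cartan's formula together with $\mathcal L_\xi\omega=2\omega$; these are equivalent.
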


	\begin{proof}
		Since the one-form $\iota_Zg_N$ is the pull-back of $\alpha=-\iota_{J\xi}g$ we can calculate its differential as $\pi^*\d\alpha$, where $\d\alpha$ can be expressed as twice the skew-symmetric part of the Levi-Civita covariant derivative $$D\alpha=-g(D(J\xi),\cdot)=-g(J\cdot,\cdot)=-\omega.$$
		
		Since this is skew-symmetric, we see that $\d\iota_Zg_N=\pi^*\d\alpha=-2\pi^*\omega$. This shows that \begin{equation}\label{oH:eq}
			\omega_\hor=\begin{pmatrix}
				-\omega&0\\
				0&\omega^{-1}
			\end{pmatrix}.
		\end{equation}
	\end{proof}

	Incidentally, the above expression for $\omega_\hor$ shows that the endomorphism $$I_\hor:=g_N^{-1}\omega_\hor=\begin{pmatrix}
		-J&0\\
		0&J^*
	\end{pmatrix}$$ is a $g_N$-skew-symmetric almost complex structure, which obviously commutes with $I_1$, $I_2$ and $I_3$. In this way we recover the statement that $\omega_\hor$ is of type $(1,1)$ for the three complex structures.

	\begin{corollary}
		Any rigid c-map space $(N,g_N,I_1,I_2,I_3)$ carries a canonical almost K\"ahler structure $(g_N,I_\hor)$.
	\end{corollary}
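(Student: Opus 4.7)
The plan is to verify the three conditions defining an almost K\"ahler structure on $(N,g_N,I_\hor)$, namely that $I_\hor$ is an almost complex structure, that $g_N$ and $I_\hor$ are compatible, and that the associated fundamental two-form is closed. All three follow almost immediately from what has already been established: the explicit matrix form \eqref{oH:eq} of $\omega_\hor$, the definition $I_\hor=g_N^{-1}\omega_\hor$, and the fact that $\omega_\hor$ is symplectic.

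First I would verify that $I_\hor^2=-\Id$. Using the block-matrix expressions for $g_N$ and $\omega_\hor$ coming from the splitting $T(T^*M)\cong\pi^*(TM)\oplus\pi^*(T^*M)$, the endomorphism $I_\hor=g_N^{-1}\omega_\hor=\mathrm{diag}(-J,J^*)$ squares to $\mathrm{diag}(J^2,(J^*)^2)=-\Id$, since $J^2=-\Id$ on $TM$ implies $(J^*)^2=-\Id$ on $T^*M$.

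Next, compatibility of $g_N$ with $I_\hor$ follows from the remark already made in the text that $I_\hor$ is $g_N$-skew-symmetric, which in turn is immediate from the skew-symmetry of $\omega_\hor$ and the identity $g_N(I_\hor X,Y)=\omega_\hor(X,Y)$. Combined with $I_\hor^2=-\Id$, one gets
\[
g_N(I_\hor X,I_\hor Y)=-g_N(X,I_\hor^2 Y)=g_N(X,Y),
\]
so $(g_N,I_\hor)$ is an almost Hermitian structure.

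Finally, the fundamental two-form of $(g_N,I_\hor)$ is, by the very definition $I_\hor=g_N^{-1}\omega_\hor$, equal to $\omega_\hor$ itself. Since the preceding proposition shows that $\omega_\hor$ is symplectic, in particular closed, we conclude that $(g_N,I_\hor)$ defines an almost K\"ahler structure on $N$. Canonicity is clear because $\omega_\hor$, and therefore $I_\hor$, is built from the canonical data $(g_N,\omega_1,Z)$ of the rigid c-map structure. There is no real obstacle in this argument; the corollary is essentially a repackaging of the preceding proposition once one observes that the fundamental two-form of the pair $(g_N,I_\hor)$ coincides with $\omega_\hor$.
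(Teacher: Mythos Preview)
Your proof is correct and follows essentially the same approach as the paper: the corollary is stated there without a separate proof, since it is an immediate consequence of the preceding paragraph, where the block form $I_\hor=\diag(-J,J^*)$ is computed and observed to be a $g_N$-skew-symmetric almost complex structure, together with the proposition establishing that $\omega_\hor$ is symplectic. You have simply spelled out the three verifications that the paper leaves implicit.
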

 
    \subsection{Canonical lifts}
    
    Let $\Aut(M)$ be the group of CASK automorphisms of the CASK manifold $M$. Canonically lifting to $N=T^*M$, we obtain a group of $\omega_\hor$-Hamiltonian automorphisms of the rigid c-map structure, as proven in \cite{CST21}. It was remarked in \cite[Proposition~2.10]{CRT21} that this group even admits an equivariant moment map $\mu:N\longrightarrow\frg^*$ with respect to $\omega_\hor$. We recall the result in Proposition~\ref{prop:moment_map} below adding details and fixing notation.\medskip
	
	Let $X$ denote a vector field on $M$ generating a one-parameter family of automorphisms of the CASK structure, and $Y$ its canonical lift to $N$. Then, with respect to the splitting $T(T^*M)=T^\hor N\oplus T^\ver N\cong\pi^*(TM)\oplus\pi^*(T^*M)$, we may write $$Y=\pi^*X-\lambda\circ(\pi^*\nabla)(\pi^*X).$$
	
	Here $\lambda:\xi\longmapsto\lambda_\xi$ is the tautological section of the vector bundle $\pi^*(T^*M)$, defined as $$\lambda_\xi(v)=\xi(v),\qquad\xi\in N_p=T_p^*M,\,v\in(\pi^*(TM))_\xi=T_pM,\,\pi(\xi)=p,$$ where $\pi^*X=X\circ\pi\in\Gamma(\pi^*(TM))$ and $\lambda\circ(\pi^*\nabla)(\pi^*X)\in\Gamma(\pi^*(T^*M))\subset\Gamma (T^*N)$ is the one-form sending a vector field $A\in\Gamma(TN)$ to the smooth function $\lambda((\pi^*\nabla)_A(\pi^*X))$, and where we have identified $\pi^*(T^*M)=(\pi^*(TM))^*=(T^\hor N)^*=(T^\ver N)^0\subset T^*N$.\medskip
	
	Note that \begin{equation}\label{pi*nabla:eq}
		(\pi^*\nabla)_Y\lambda=-\lambda\circ(\pi^*\nabla)(\pi^*X).
	\end{equation}

	Indeed, for a vector field $Y\in\Gamma(TN)$ and $\lambda\in\Gamma(\pi^*(T^*M))$ the tautological section, $(\pi^*\nabla)_Y\lambda$ gives us the vertical component of $Y$, which is precisely $-\lambda\circ(\pi^*\nabla)(\pi^*X)$.\medskip
	
	In local coordinates $\{q^i,p_i\}$ on $N$ induced by local $\nabla$-affine coordinates $\{q^i\}$ on $M$, the tautological section $\lambda$ on $N$ is given by $\lambda(\pd{}{q^i})=p_i$ and the vector field $Y\in\Gamma(TN)$ by $$Y=X^j\pd{}{q^j}-p_i\pd{X^i}{q^j}\pd{}{p_j}$$ in terms of the components $\{X^i\}$ of the vector field $X$ in the local coordinates $\{q^i\}$.
	
	\begin{proposition}\label{prop:moment_map}
		Let $X$ be an infinitesimal CASK automorphism and $Y$ its canonical lift as explained above. Then $$\mu_Y=\frac{1}{2}\left(g_N(Z,Y)+\pi^*\omega^{-1}(\lambda\circ(\pi^*\nabla)(\pi^*X),\lambda)\right)$$ is a $\omega_\hor$-Hamiltonian function for $Y$, where $Z=-\widetilde{J\xi}$. This assignment determines an equivariant (co)moment map $\mu:\aut(M)\longrightarrow\calC^\infty(N)$, $X\longmapsto\mu_X:=\mu_Y$, for the action of $\aut(M)$ on $N=T^*M$.
	\end{proposition}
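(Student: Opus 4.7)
My plan is to verify the two parts of the proposition separately: (i) the Hamiltonian condition $d\mu_Y = -\iota_Y\omega_\hor$, and (ii) the equivariance of $X\mapsto\mu_X$ as a (co)moment map, i.e.\ the Lie algebra homomorphism property $\mu_{[X_1,X_2]} = \{\mu_{X_1},\mu_{X_2}\}_{\omega_\hor}$.

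For (i), I would work in local $\nabla$-affine coordinates $\{q^i\}$ on $M$ together with the induced canonical coordinates $\{q^i,p_i\}$ on $N=T^*M$. Two crucial simplifications apply: $\omega_{ij}$ and $\omega^{ij}$ are constant (since $\nabla\omega=0$), and the components $X^i$ are affine in $q$ (since $X$ preserves $\nabla$). Using the block form \eqref{oH:eq} of $\omega_\hor$ together with $Y = X^j\partial_{q^j} - p_i(\partial_{q^j}X^i)\partial_{p_j}$, the contraction $\iota_Y\omega_\hor$ splits into a horizontal one-form (from the $-\omega$-block) and a vertical one-form (from the $\omega^{-1}$-block). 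On the other side, $\mu_Y$ decomposes as $\tfrac12 g_N(Z,Y) = -\tfrac12 g(J\xi,X)$ (a base function) plus the quadratic term $\tfrac12\omega^{jk}p_ip_k\partial_{q^j}X^i$. For the vertical part of $d\mu_Y$, the only contribution comes from the quadratic term, and matching it with the $\omega^{-1}$-block of $-\iota_Y\omega_\hor$ reduces to the algebraic identity that $(k,l)\mapsto \omega^{jk}\partial_j X^l$ is symmetric, which is exactly $\mathcal{L}_X\omega=0$ in these coordinates. For the horizontal part of $d\mu_Y$, the quadratic term contributes nothing (both $\omega^{jk}$ and $\partial_{q^j}X^i$ are constant in $q$), so one must match $-\tfrac12 d(g(J\xi,X))$ against the pullback of $-\iota_X\omega$. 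Expanding the derivative using $DJ=0$ and $D\xi=\Id$ and then invoking the Killing equation for $X$ together with the commutation identity $D_{J\xi}X=JX$ (which follows from $[X,J\xi]=0$ and $D(J\xi)=J$) produces the matching.

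For (ii), I would argue intrinsically: each ingredient of $\mu_Y$ --- the hyper-K\"ahler metric $g_N$, the rotating field $Z=-\widetilde{J\xi}$, the tautological section $\lambda$, the pullback connection $\pi^*\nabla$ and the Poisson bivector $\omega^{-1}$ --- is canonically associated to the CASK data on $M$ and is preserved by the canonical lift to $N$ of any element of $\Aut(M)$. Consequently $\mu:\aut(M)\to\calC^\infty(N)$ is manifestly $\Aut(M)$-equivariant, and, combined with (i), this yields the Poisson bracket relation above by the standard symplectic-geometry argument. The main obstacle I anticipate is the horizontal matching in (i): unlike the vertical matching, which reduces to a single algebraic identity, the horizontal matching requires combining several of the CASK automorphism properties of $X$ simultaneously with the ambient structure identities $DJ=0$ and $D\xi=\Id$, and the driving identity $D_{J\xi}X = JX$ --- which connects the $J\xi$-derivative of $X$ with the action of $J$ on $X$ --- is what makes the horizontal terms from the two summands of $\mu_Y$ assemble into $-\pi^*\iota_X\omega$ as required.
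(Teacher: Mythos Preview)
Your proposal is correct and follows the same overall architecture as the paper: split $\iota_Y\omega_\hor$ and $d\mu_Y$ into horizontal and vertical pieces using the block form \eqref{oH:eq}, match the vertical pieces via the symmetry of $\omega^{jk}\partial_{q^j}X^i$ coming from $\mathcal L_X\omega=0$, and deduce equivariance from the naturality of all ingredients ($g_N$, $Z$, $\lambda$, $\pi^*\nabla$, $\omega^{-1}$) under canonical lifts.

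The one genuine difference is in the horizontal matching. The paper obtains $2\iota_X\omega = d(g(-J\xi,X))$ in a single stroke from the homothety $\mathcal L_\xi\omega = 2\omega$ of the Euler field: since $\omega$ is closed this reads $2\omega = d\iota_\xi\omega$, and contracting with $X$ and applying Cartan's formula gives
\[
2\iota_X\omega \;=\; \iota_X d(\iota_\xi\omega) \;=\; \mathcal L_X(\iota_\xi\omega) - d(\iota_X\iota_\xi\omega),
\]
where the first term vanishes because $X\in\aut(M)$ preserves both $\xi$ and $\omega$. Your route via $DJ=0$, $D\xi=\Id$, the Killing equation for $X$, and the derived identity $D_{J\xi}X = JX$ (from $[X,J\xi]=0$ together with $D(J\xi)=J$) also works and yields the same relation after expanding $d(g(J\xi,X))$ through the Levi-Civita connection; it is simply a more hands-on computation where the paper's argument stays at the level of Lie derivatives and exploits the conical structure directly. (A minor slip: your stated target ``pullback of $-\iota_X\omega$'' should be ``pullback of $\iota_X\omega$'', since the horizontal block of $\omega_\hor$ is $-\omega$ and you are matching against $-\iota_Y\omega_\hor$.)
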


	\begin{proof}
		We have to show that $\iota_Y\omega_\hor=-\d\mu_Y$. Recall that $\omega_\hor$ is given by \eqref{oH:eq} with respect to the splitting $T(T^*M)\cong\pi^*(TM)\oplus\pi^*(T^*M)$. Thus we have $$\iota_Y\omega_\hor=-\pi^*(\iota_X\omega)+\iota_{(\pi^*\nabla)_Y\lambda}\pi^*\omega^{-1}.$$
		
		Let us compute these two terms. First notice that, by Proposition~\ref{prop:basis_CASK}~(a) we have that 
		$\mathcal L_\xi\omega=2\omega$ and, hence $$2\iota_X\omega=\iota_X\mathcal L_\xi\omega =\iota_X\d(\iota_\xi\omega)=\mathcal L_X(\iota_\xi\omega)-\d(\iota_X\iota_\xi\omega)=\d(g(-J\xi,X)),$$ 
		since $\mathcal L_X(\iota_\xi\omega)=0$ (recall that $X$ is an infinitesimal CASK automorphism). So we see that \begin{equation}\label{iotaXomega:eq}
			\pi^*(\iota_X\omega)=\frac{1}{2}\d(g_N(Z,Y)).
		\end{equation}
	
		To compute the second term, let $A$ be an arbitrary section of $\pi^*(T^*M)\cong T^\ver(T^*M)$. Let $\{q^j\}$ be special real coordinates on $M$ and $\{q^j,p_j\}$ the corresponding canonical coordinates on $N$. With respect to these, we can write $A=A_i\d q^i$ and, using $(\pi^*\nabla)_Y\lambda=-\lambda\circ(\pi^*\nabla)(\pi^*X)=-p_i\pd{X^i}{q^j}\d q^j$, we get $$\pi^*\omega^{-1}((\pi^*\nabla)_Y\lambda,A)=-\omega^{jk}p_i\pd{X^i}{q^j}A_k.$$
		
		Thus, $\iota_{(\pi^*\nabla)_Y\lambda}\pi^*\omega^{-1}$ is the one-form on $T^*M$ which vanishes when applied to a horizontal vector field and evaluates on a vertical vector field $A=A_i\pd{}{p_i}$ (corresponding to the $A$ consider earlier by the canonical isomorphism $\pi^*(T^*M)\cong T^\ver(T^*M)$) as above. This means that we can write it, in local coordinates, as \begin{equation}\label{iotapi*nabla:eq}
			\iota_{(\pi^*\nabla)_Y\lambda}\pi^*\omega^{-1}=-\omega^{jk}p_i\pd{X^i}{q^j}\d p_k.
		\end{equation}
	
		Now, we compute the differential of our proposed moment map: $$\d\mu_Y=\frac{1}{2}\left(\d(g_N(Z,Y))+\d(\pi^*\omega^{-1}(\lambda\circ(\pi^*\nabla)(\pi^*X),\lambda))\right),$$ the second term of which can be computed (using \eqref{pi*nabla:eq} and \eqref{iotapi*nabla:eq}) in local coordinates as follows: \begin{align*}
			\frac{1}{2}\d(\pi^*\omega^{-1}(\lambda\circ(\pi^*\nabla)(\pi^*X),\lambda))&=\frac{1}{2}\d\left(\omega^{jk}p_ip_k\pd{X^i}{q^j}\right)=\frac{1}{2}\omega^{jk}\pd{X^i}{q^j}(p_i\d p_k+p_k\d p_i)\\
			&=\omega^{jk}p_i\pd{X^i}{q^j}\d p_k=-\iota_{(\pi^*\nabla)_Y\lambda}\pi^*\omega^{-1},
		\end{align*} where, in passing to the second line, we used that both $\omega^{jk}$ and $\pd{X^i}{q^j}$ are anti-symmetric in their indices; the latter fact is just the equation $\mathcal L_X \omega=0$ in ($\nabla$-affine) coordinates.\medskip
  
        This computation, together with \eqref{iotaXomega:eq}, concludes the proof that $\mu_Y$ is indeed a Hamiltonian function for $Y$.\medskip
		
		To show that the map $\mu$ is equivariant, let $X_1,X_2\in\aut(M)$ and $Y_1,Y_2\in\Gamma(TN)$ their corresponding canonical lifts. Note that the canonical lift of $[X_1,X_2]$ is precisely $[Y_1,Y_2]$. Using this and the fact that the moment map is constructed in a canonical way, we obtain $$\calL_{Y_1}(\mu_{Y_2})=\frac{1}{2}\left(g_N(Z,[Y_1,Y_2])+\pi^*\omega^{-1}(\lambda\circ(\pi^*\nabla)(\pi^*([X_1,X_2])),\lambda)\right)=\mu_{[Y_1,Y_2]}.$$
	\end{proof}

	\begin{remark}\textcolor{white}{}
		\begin{itemize}
			\itemsep 0em
			\item Note that the $\omega_\hor$-Hamiltonian function $\mu_Y$ given by Proposition~\ref{prop:moment_map} is the only moment map that is homogeneous, i.e.\ $\calL_\Xi\mu_Y=2\mu_Y$, where $\Xi\in\Gamma(TN)$ is the sum of the $\nabla$-horizontal lifted Euler vector field $\xi$ with the fiberwise Euler (or position) vector field of $T^*M$. In canonical coordinates associated to (conical) special real coordinates, $\Xi$ takes the form $\Xi=q^j\pd{}{q^j}+p_j\pd{}{p_j}$.
			\item Note also that the equivariance of the moment map $\mu:\mathfrak{aut}(M)\longrightarrow\calC^\infty(N)$ (without assuming homogeneity) fixes it uniquely up to adding a linear form $c:\mathfrak{aut}(M)\longrightarrow\R$, $X\longmapsto c_X$, invariant under the coadjoint representation. The space of such forms is trivial if and only if the Lie algebra is perfect, that is, it coincides with its derived ideal. This is the case, in particular, for semisimple Lie algebras.
		\end{itemize}
	\end{remark}

	Thus, we have a canonical action of $\Aut(M)$ on $N$ which casts $\Aut(M)$ as a subgroup of $\Ham_{S^1}(N)$, with a canonical choice of equivariant moment map.
	
	\subsection{Translations in the fibers}
	
	One of the crucial features of the rigid c-map metric on $N$ is that it is semi-flat. This means that it is foliated by half-dimensional flat submanifolds. These are just the fibers of $\pi:N=T^*M\longrightarrow M$, and the reason they are flat is that the metric in each fiber is constant with respect to the affine structure induced by the vector space structure of the fiber, as can be seen directly from \eqref{eq:rigid_c-map_structure}.\medskip
	
	An important consequence is the following result.
	
	\begin{proposition}\label{translations:prop}
		The cotangent bundle $N=T^*M$ carries locally an $\omega_\hor$-Hamiltonian action of the group $\R^{2n}$ by automorphisms of the rigid c-map structure, which preserves the fibers and acts on them by translations. If the holonomy group of the special connection $\nabla$ on $M$ is trivial (in particular, if $M$ is simply connected), then the action is global.
	\end{proposition}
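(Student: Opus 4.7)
The strategy is to construct the action in local $\nabla$-affine coordinates and then identify the obstruction to gluing as the holonomy of $\nabla$. Since $\nabla$ is flat and torsion-free, around any point of $M$ there exist local special real coordinates $\{q^i\}_{i=1}^{2n}$ with $\nabla\pd{}{q^i}=0$, inducing canonical coordinates $\{q^i,p_i\}$ on $N=T^*M$. My candidate $\R^{2n}$-action is fibrewise translation $(q^i,p_i)\longmapsto(q^i,p_i+t_i)$, whose commuting infinitesimal generators are the vertical fields $X_i:=\pd{}{p_i}$.

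First I would verify that each $X_i$ preserves the rigid c-map structure. In $\nabla$-affine coordinates the $\nabla$-horizontal lift of any vector field on $M$ takes the form $X^j(q)\pd{}{q^j}$; in particular the rotating field $Z=-\widetilde{J\xi}$ is independent of the fibre coordinates $p_i$. The block entries $g,J,\omega,g^{-1},\omega^{-1}$ of the tensors in \eqref{eq:rigid_c-map_structure} are pulled back from $M$ and hence likewise depend only on $q$. Consequently $g_N,I_1,I_2,I_3,Z$ and the function $f_Z^c=-\tfrac{1}{2}g_N(Z,Z)-\tfrac{1}{2}c$ are all translation-invariant, so each $X_i$ is an infinitesimal automorphism of the rigid c-map structure.

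For the Hamiltonian property I would use the explicit expression \eqref{oH:eq} for $\omega_\hor$. A direct contraction gives $\iota_{X_i}\omega_\hor=\omega^{ij}\,\d p_j$; since $\nabla\omega=0$ and the $q^i$ are $\nabla$-affine, the components $\omega^{ij}$ of $\omega^{-1}$ are constant in these coordinates, so $\omega^{ij}\,\d p_j=-\d\mu_i$ with $\mu_i:=-\omega^{ij}p_j$. Thus each $X_i$ is $\omega_\hor$-Hamiltonian with a function linear in $p$, and since the $X_i$ commute, this yields a local $\omega_\hor$-Hamiltonian action of $\R^{2n}$ on $N$ by automorphisms of the rigid c-map structure, acting on each fibre by translation.

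The only subtler point, and the main obstacle, is the global extension. Two $\nabla$-affine coordinate patches on an overlap of $M$ are related by an affine transformation whose linear part lies in the linear holonomy group of $\nabla$; under such a coordinate change, fibrewise translations in the $p_i$ go to fibrewise translations precisely when this linear part is trivial. Hence if the holonomy of $\nabla$ vanishes, the local actions glue unambiguously into a single global $\R^{2n}$-action on $N$; simple connectedness of $M$ is sufficient since $\nabla$ is flat. The Hamiltonians $\mu_i$ transform under this same linear action, so they too patch consistently, completing the argument.
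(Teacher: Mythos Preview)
Your argument is correct and matches the paper's approach: construct the local action in special real coordinates, observe that the rigid c-map tensors \eqref{eq:rigid_c-map_structure} and $Z=-\widetilde{J\xi}$ depend only on the base coordinates so that the $\pd{}{p_i}$ are infinitesimal automorphisms, compute the Hamiltonian $\mu_i=-\omega^{ij}p_j$ from \eqref{oH:eq} using constancy of $\omega^{ij}$, and identify trivial holonomy of $\nabla$ as the condition for a global action. The paper phrases the last step as the existence of a global $\nabla$-parallel frame of $T^\ver N\cong\pi^*(T^*M)$ rather than via gluing charts; one small imprecision in your write-up is that the linear part of the transition between two \emph{arbitrary} $\nabla$-affine charts need not lie in the holonomy group---what is true is that when the holonomy is trivial one can \emph{choose} an atlas (equivalently, a global parallel frame) with all transitions being pure translations, and it is with respect to such an atlas that the local $\R^{2n}$-actions agree on overlaps.
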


	\begin{proof}
		We first give a description of the local action. Choosing special real coordinates $\{q^j\}$ on $M$ and using the associated canonical coordinates $\{q^j,p_j\}$ on $N$, the action of $\R^{2n}$ is generated by the locally defined vector fields $\pd{}{p_j}$. A quick look at \eqref{eq:rigid_c-map_structure} reveals that the full hyper-K\"ahler structure is preserved by these vector fields. Moreover, they commute with the rotating circle action as well, since $Z=-\widetilde{J\xi}$ only depends on the coordinates $\{q^j\}$ on the base $M$. This implies that they also preserve the $\omega_1$-Hamiltonian function $f_Z^c=-\frac{1}{2}g_N(Z,Z)-\frac{1}{2}c$. Finally, let us give a local Hamiltonian with respect to $\omega_\hor$. Writing \eqref{oH:eq} in the above coordinates, we have $$\omega_\hor=\frac{1}{2}(-\omega_{ij}\d q^i\wedge\d q^j+\omega^{ij}\d p_i\wedge \d p_j),$$ where each $\omega_{ij}$ is constant (and hence so are the components $\omega^{ij}$ of the inverse matrix), and we are omitting pullbacks when no confusion can arise.\medskip
  
        Hence, we find $$\iota_{\pd{}{p_k}}\omega_\hor=\omega^{kj}\d p_j=-\d(-\omega^{kj}p_j).$$
		
		We can thus assign the local Hamiltonian function \begin{equation}\label{eq:mu_R2n}
			\mu_{\pd{}{p_k}}=-\omega^{kj}p_j
		\end{equation} to $\pd{}{p_k}$. More generally, $\mu$ assigns to every vertical vector field $v=v_k\pd{}{p_k}$ with constant coefficients the function $\mu_v=-\omega^{kj}v_kp_j$. The vector fields $\pd{}{p_k}$ generate an action of the group $\R^{2n}$ with the claimed properties. In the local coordinates $\{q^j,p_j\}$ a vector $v=(v_j)\in\R^{2n}$ acts by $(q,p)\longmapsto(q,p+v)$.\medskip
	
		To define a global moment map $\mu:\R^{2n}\longrightarrow\calC^\infty(N)$ and a global group action of $\R^{2n}$ on $N$ it suffices to have a global frame of the vertical bundle $T^\ver N\cong\pi^*(T^*M)$ parallel with respect to the connection $\pi^*\nabla$. Such a frame exists if and only if the holonomy of the (flat) special connection is trivial.
	\end{proof}

	As a comment on the last step of the proof of Proposition~\ref{translations:prop} we note that locally (over the preimage $\pi^{-1}(U)$ of a suitable open set $U\subset M$) a parallel frame of $T^\ver N\cong\pi^*(T^*M)$ can be chosen of the form $\pi^*(\d q^i)$, where $\{q^i\}$ are local $\nabla$-affine coordinates.
 
 \subsection{The semidirect product}\label{subsection:semidirect_product}
	
	Under the assumption that the holonomy group of $\nabla$ is trivial, we have constructed two subgroups of $\Ham_{S^1}(N)$ and the next thing to do is to study how they interact. In this section, we check that the generators of the two groups combine into a semidirect product. Let us start by emphasizing that the group $\Gamma_\nabla(T^*M)\cong\R^{2n}$ of $\nabla$-parallel sections acts by addition on $N=T^*M$, that is $$\alpha\cdot\beta=\alpha(p)+\beta,$$ for all $\alpha\in\Gamma_\nabla(T^*M)$, $\beta\in T_p^*M$, $p\in M$. The group $\Aut(M)$ acts naturally on $N=T^*M$: $$h\cdot\beta=h_*\beta=(h^{-1})^*\beta,$$ for all $h\in\Aut(M)$, $\beta\in T_p^*M$, $p\in M$.
	
	\begin{proposition}\label{prop:semidirect_Aut-R2n}
		The subgroups $\R^{2n}$ and $\Aut(M)$ of $\Ham_{S^1}(N)$ generate a group $G\subset\Ham_{S^1}(N)$ which is a semidirect product $\Aut(M)\ltimes\R^{2n}$.
	\end{proposition}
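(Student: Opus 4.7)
My plan is to regard both $\Aut(M)$ and $\R^{2n}\cong\Gamma_\nabla(T^*M)$ as subgroups of $\Diff(N)$ already living inside $\Ham_{S^1}(N)$ (by the previous two subsections), then verify the two abstract criteria that identify the subgroup they generate with a semidirect product: $\Aut(M)$ normalizes $\R^{2n}$, and the two subgroups intersect trivially.

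\textbf{Key calculation.} The heart of the argument is a direct unwinding of the conjugation $h\circ t_\alpha\circ h^{-1}$, where $h\in\Aut(M)$ acts on $N=T^*M$ by $\beta\longmapsto(h^{-1})^*\beta$ and $t_\alpha(\beta)=\alpha(\pi(\beta))+\beta$ for $\alpha\in\Gamma_\nabla(T^*M)$. Tracing fibers, for $\beta\in T_p^*M$ I expect to obtain
\[
(h\circ t_\alpha\circ h^{-1})(\beta)=(h^{-1})^*\big(\alpha(h^{-1}(p))\big)+\beta,
\]
which is translation by the pushforward section $h_*\alpha:=(h^{-1})^*(\alpha\circ h^{-1})$. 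Because $h$ preserves the flat connection $\nabla$ by definition of a CASK automorphism, $h_*\alpha$ is again $\nabla$-parallel, hence lies in $\R^{2n}$. This establishes normality and simultaneously identifies the semidirect product action $\Aut(M)\to\GL(\R^{2n})$ with the natural action of $\Aut(M)$ on $\nabla$-parallel one-forms.

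\textbf{Trivial intersection and conclusion.} Any translation $t_\alpha$ covers $\mathrm{id}_M$ via $\pi$, so an element of $\Aut(M)$ which coincides with some $t_\alpha$ as a diffeomorphism of $N$ must induce the identity on $M$; a CASK automorphism that is the identity on $M$ is the identity, forcing $\alpha=0$. Normality now lets me rewrite every word in $\Aut(M)\cup\R^{2n}$ as a product $h\cdot t_\alpha$, and triviality of the intersection makes this decomposition unique. Standard group theory then identifies the generated subgroup $G\subset\Ham_{S^1}(N)$ with $\Aut(M)\ltimes\R^{2n}$.

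\textbf{Main obstacle.} The only non-routine point is the conjugation formula; once it is verified, the rest is bookkeeping. A secondary subtlety worth flagging is that the identification $\Gamma_\nabla(T^*M)\cong\R^{2n}$ requires triviality of the holonomy of $\nabla$, which is the standing hypothesis under which a global $\R^{2n}$-action exists (cf.\ Proposition~\ref{translations:prop}); without it, one obtains only a local semidirect product statement.
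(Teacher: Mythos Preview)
Your proposal is correct and follows essentially the same route as the paper: the paper also establishes the semidirect product by computing the conjugation $(h\cdot\alpha\cdot h^{-1})\cdot\beta=(h_*\alpha)\cdot\beta$ to show $\Aut(M)$ normalizes the translations, and argues trivial intersection from the fact that canonical lifts of $\Aut(M)$ cover diffeomorphisms of $M$ while translations are fiber-preserving. Your remark about the standing holonomy hypothesis is apt and matches the paper's assumption.
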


	\begin{proof}
		It is clear that the two subgroups have trivial intersection, as $\Aut(M)$ lifts a subgroup of diffeomorphisms of $M$ whereas $\R^{2n}$ preserves each fiber of $N=T^*M$. The action of $\Aut(M)$ is linear on the fibers and preserves the space of $\nabla$-parallel one-forms. We check explicitly that the elements $h\in\Aut(M)$ normalize the group of translations: $$(h\cdot\alpha\cdot h^{-1})\cdot\beta=h_*(\alpha(h^{-1}(p))+h^*\beta)=(h_*\alpha)(p)+\beta=(h_*\alpha)\cdot\beta,$$ for all $\alpha\in\Gamma_\nabla(T^*M)$, $\beta\in T_p^*M$, $p\in M$. This proves that $h\cdot\alpha\cdot h^{-1} =h_*\alpha\in\Gamma_\nabla(T^*M)$ for all $\alpha\in\Gamma_\nabla(T^*M)$.
        \end{proof}

	Infinitesimally, we can describe the semidirect product structure in terms of structure constants if we choose a basis. Thus, let $\{Y_\alpha\}$ be infinitesimal generators of the action of $\Aut(M)$ on $N$, obtained by canonically lifting generators $\{X_\alpha\}\subset\aut(M)$. With respect to canonical coordinates induced by (conical) special real coordinates on $M$, we can express $Y_\alpha$ as $$Y_\alpha=X_\alpha^j\pd{}{q^j}-p_i\pd{X_\alpha^i}{q^j}\pd{}{p_j},$$ where the component functions $X_\alpha^j$ of $X_\alpha$ are linear functions. The generators of the $\R^{2n}$-action are the vectors $\pd{}{p_k}$. The structure constants are now easily computed: $$\left[Y_\alpha,\pd{}{p_k}\right]=\pd{X_\alpha^k}{q^j}\pd{}{p_j}.$$
	
	Note that the coefficients multiplying $\pd{}{p_j}$ on the right-hand side are indeed constants, since any $X\in\aut(M)$ is $\nabla$-affine (i.e.\ $\calL_X\nabla=0$). Together with the structure constants of $\Aut(M)$ this determines the structure of $\aut(M)\ltimes\R^{2n}$.
	
	\section{Symmetries under the HK/QK correspondence}
	
	Now that we have a completely explicit description of the group $\Aut(M)\ltimes\R^{2n}\subset\Ham_{S^1}(N)$, the next step is to transfer the group action to the quaternionic K\"ahler manifold $\bar{N}$. Up to a so-called elementary modification explained below, this is essentially done by first lifting to $P=N\times S^1$ and subsequently studying the induced action on $\bar{N}$, which we realize as a submanifold of $P$. We will continue to assume that the holonomy group of the flat special connection $\nabla$ is trivial to ensure the global $\R^{2n}$-action.

    \subsection{The HK/QK correspondence as a twist}
    
    The HK/QK correspondence \cite{Hay08,ACM13} produces a quaternionic K\"ahler manifold from a (pseudo-)hyper-K\"ahler manifold equipped with a rotating circle action. This correspondence was studied in \cite{MS15} using the twist formalism introduced by Swann in \cite{Swa10}. We briefly recall here this construction (see \cite{Swa10} for details).\medskip

    Let $N$ be a manifold equipped with an $S^1$-action generated by a vector field $Z\in\Gamma(TN)$, and let $\pi_N:P\longrightarrow N$ be a principal circle bundle over $N$ with connection one-form $\eta\in\Omega^1(P)$ and curvature $\omega\in\Omega^2(N)$ (i.e.\ $\d\eta=\pi_N^*\omega$). We want to lift the vector field $Z$ to a vector field $Z_P\in\Gamma(TP)$ so that it preserves the connection $\eta$, i.e.\ $\calL_{Z_P}\eta=0$, and it commutes with the principal circle action, i.e.\ $[Z_P,X_P]=0$, where $X_P\in\Gamma(TP)$ generates the principal circle action. It turns out that such a lift exists if and only if $\iota_Z\omega=-\d f_Z^c$ for some function $f_Z^c\in\calC^\infty(N)$. The lift is given by $$Z_P=\tilde{Z}+\pi_N^*f_Z^cX_P,$$ where $\tilde{Z}$ denotes the horizontal lift with respect to $\eta$. The triple $(Z,\omega,f_Z^c)$ with the above properties is called \emph{twist data}. A manifold $N$ equipped with twist data produces a new smooth\footnote{We will always assume that both $Z$ and $Z_P$ generate free and proper actions so that the quotient space $\bar N$ is a smooth manifold.} manifold $\bar N:=P/\escal{Z_P}$ called the \emph{twist} of $N$ with respect to the twist data $(Z,\omega,f_Z^c)$. We then have a double fibration structure on $P$:
    
    $$\begin{tikzcd}
    & P \arrow[ld, "\pi_N"'] \arrow[rd, "\pi_{\bar N}"] \arrow[<-, out=65, in=25, loop, "Z_P"] \arrow[out=155, in=115, loop, "X_P"] & \\
    N \arrow[loop left, "Z"] \arrow[rr, <->, "\mathrm{twist}"', dashed] && \bar{N} \arrow[<-, loop right, "Z_{\bar N}"]
    \end{tikzcd}$$

    Recall that $X_P$ generates the principal circle action with respect to the projection $\pi_N:P\longrightarrow N$ and note that $Z_P$ plays the same role for the projection $\pi_{\bar N}:P\longrightarrow\bar N$. Note also that the twist construction produces a circle action on $\bar N$ generated by the vector field $Z_{\bar N}:=\d\pi_{\bar N}(X_P)\in\Gamma(T\bar N)$.\medskip

    Now let $(N,g_N,I_1,I_2,I_3)$ be a pseudo-hyper-K\"ahler manifold equipped with a rotating Killing vector field $Z$ generating a circle action. To obtain a quaternionic K\"ahler manifold from the twist construction, one first needs to ``deform'' the pseudo-hyper-K\"ahler metric $g_N$ to another metric $g_\hor^c$ and then apply the twist construction to the manifold $(N,g_\hor^c)$ with the appropriate twist data. This is the idea behind an elementary deformation. Therefore, let $\omega_1:=g_N(I_1\cdot,\cdot)$ and $f_Z^c$ a nowhere vanishing function such that $\iota_Z\omega_1=-\d f_Z^c$. Define $$\omega_\hor:=\omega_1+\d\iota_Zg_N,\quad f_\hor^c:=f_Z^c+g_N(Z,Z),\quad g_\hor^c:=\frac{1}{f_Z^c}g_N|_{(\mathbb{H}Z)^\perp}+\frac{f_\hor^c}{(f_Z^c)^2}g_N|_{\mathbb{H}Z},$$ where $\mathbb{H}Z:=\spn\{Z,I_1Z,I_2Z,I_3Z\}$. One can check that $(Z,\omega_\hor,f_\hor^c)$ is a twist data for $N$. It was shown then in \cite{MS15} that the twist of $(N,g_\hor^c)$ with respect to the twist data $(Z,\omega_\hor,f_\hor^c)$ is a quaternionic K\"ahler manifold $(\bar N,g_{\bar N}^c)$. The upshot is that we can use the twist formalism to study the geometry of the quaternionic K\"ahler manifold $\bar N$ purely in terms of the geometry of $N$ and the twist data. This will be the approach in the following subsections.
    
    \subsection{Infinitesimal description}
	
    The infinitesimal description of the transfer of symmetries under the HK/QK correspondence appears explained in \cite{CST21}: one performs an (elementary) modification and then twists. More precisely, let $\frg\subset\mathfrak{ham}_{S^1}(N)$ be a subalgebra with moment map $\mu:N\longrightarrow\frg^*$ with respect to $\omega_\hor$. Denote the Hamiltonian function corresponding to $V\in\frg$ by $\mu_V$. Then, the first step consists in modifying $V$ to $$V_\hor:=V-\frac{\mu_V}{f_\hor^c}Z\in\Gamma(TN).$$
	
	We will sometimes refer to $V_\hor$ as the elementary deformation of $V$. The second step is twisting $V_\hor$ to a vector field $\operatorname{tw}(V_H)\in\Gamma(T\bar{N})$, which we will denote by $V_{\rmQ}$. Twisting is done by lifting $V_\hor$ horizontally (with respect to a given connection $\eta$ whose curvature is $\omega_\hor$) to the trivial circle bundle $P=N\times S^1$ and then projecting down to $\bar{N}$. In other words $$\operatorname{tw}(V_\hor)=V_{\rmQ}=\d\pi_{\bar{N}}(\tilde{V}_\hor)=\d\pi_{\bar{N}}\left(\tilde{V}-\frac{\mu_V}{f_\hor^c}\tilde{Z}\right),$$ where we have denoted the $\eta$-horizontal lift to $P$ by a tilde.\medskip
	
	This procedure gives rise to an injective, linear map $\varphi_\mu:\frg\longrightarrow\aut_{S^1}(\bar{N})$, dependent on the choice of moment map $\mu$. Here, $\aut_{S^1}(\bar{N})$ denotes the space of Killing fields of the quaternionic K\"ahler manifold $(\bar{N},g_{\bar{N}}^c)$, $c\geq0$, which commute with the canonical, isometric circle action on $\bar{N}$ generated by $Z_{\bar{N}}=\operatorname{tw}(-\frac{1}{f_\hor^c}Z)$.\medskip
	
	It is shown in \cite{CST21} that this linear map is a homomorphism of Lie algebras if and only if the moment map $\mu$ is equivariant. Indeed, choosing a basis $\{V_i\}$ of $\frg$ with corresponding structure constants $\{c_{ij}^k\}$, the equivariance condition can be expressed as $$\omega_\hor(V_i,V_j)-c_{ij}^k\mu_{V_k}=0.$$
	
	The left-hand side is precisely what measures the failure of $\varphi_\mu$ to be a homomorphism, according to \cite[Theorem~3.8]{CST21}.\medskip
	
	In the case at hand, we are considering $\frg\cong\aut(M)\ltimes\R^{2n}$, and we have a canonical moment map. Since this canonical moment map is equivariant for the action of the subgroup $\Aut(M)$ by Proposition~\ref{prop:moment_map}, we obtain a subalgebra of $\aut_{S^1}(\bar{N})$ isomorphic to $\aut(M)$. However, the canonical choice of moment map we gave for $\R^{2n}$ in \eqref{eq:mu_R2n} is not equivariant. Indeed $$\omega_\hor\left(\pd{}{p_i},\pd{}{p_j}\right)=\omega^{ij}\neq0,$$ while $\R^{2n}$ is of course abelian and therefore has vanishing structure constants. Following \cite[Theorem~3.8]{CST21}, this implies that $\R^{2n}$ gives rise to a subalgebra of $\aut_{S^1}(\bar{N})$ which is a one-dimensional central extension of $\R^{2n}$ by $Z_{\bar{N}}$, and whose non-trivial brackets are given by the coefficients of $\omega^{-1}$. In other words, $$\left[\left(\pd{}{p_i}\right)^{\rmQ},\left(\pd{}{p_j}\right)^{\rmQ}\right]=\omega^{ij}Z_{\bar{N}}.$$
	
	Since $\omega^{-1}$ is the natural symplectic form on the fibers of $N=T^*M$, the central extension in question is nothing but the Heisenberg algebra $\heis_{2n+1}$.\medskip
	
	We thus obtain two algebras of Killing fields, isomorphic to $\aut(M)$ and $\heis_{2n+1}$, respectively. Together, they generate an algebra which is once again a semidirect product $\aut(M)\ltimes\heis_{2n+1}$. To see that they indeed form a semidirect product, it suffices to show that $[\aut(M),\heis_{2n+1}]\subset\heis_{2n+1}$. To check this, consider $Y_\alpha^{\rmQ}\in\aut(M)\subset\mathfrak{aut}_{S^1}(\bar{N})$ and $(\pd{}{p_k})^{\rmQ}\in\R^{2n}\subset\heis_{2n+1}$ (we already know that $Z_{\bar{N}}$ is central in the full algebra), where the vector fields $Y_\alpha$ were introduced in Section~\ref{subsection:semidirect_product}. Then, again by \cite[Theorem~3.8]{CST21}, we have $$\left[Y_\alpha^\rmQ,\left(\pd{}{p_k}\right)^{\rmQ}\right]=\pd{X_\alpha^k}{q^j}\left(\pd{}{p_j}\right)^\rmQ+\operatorname{tw}\left(\omega_\hor\left(Y_\alpha,\pd{}{p_k}\right)-\pd{X_\alpha^k}{q^j}\mu_{\pd{}{p_j}}\right)Z_{\bar{N}},$$ where $X_\alpha\in\Gamma(TM)$ lifts to $Y_\alpha$. Now we use local coordinates to compute $$\omega_\hor\left(Y_\alpha,\pd{}{p_k}\right)=-\pd{X_\alpha^k}{q^j}\omega^{j\ell}p_\ell=\pd{X_\alpha^k}{q^j}\mu_{\pd{}{p_j}}$$ and conclude that $$\left[Y_\alpha^\rmQ,\left(\pd{}{p_k}\right)^{\rmQ}\right]=\pd{X_\alpha^k}{q^j}\left(\pd{}{p_j}\right)^\rmQ,$$ so $\heis_{2n+1}$ is an ideal inside the Lie algebra $\frg$ generated by $\aut(M)$ and $\heis_{2n+1}$ and we have $\frg\cong\aut(M)\ltimes\heis_{2n+1}$. Summarizing, we have obtained the following result.
 
    \begin{proposition}
        There exists a subalgebra of $\aut_{S^1}(\bar{N})$ which is isomorphic to the semidirect product $\aut(M)\ltimes\heis_{2n+1}$.
    \end{proposition}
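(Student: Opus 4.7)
The plan is to assemble the computations already performed in this subsection into a single Lie algebra assertion, by applying \cite[Theorem~3.8]{CST21} separately to the two factors of $\frg:=\aut(M)\ltimes\R^{2n}\subset\mathfrak{ham}_{S^1}(N)$ --- equipped with the canonical moment maps introduced earlier --- and then analyzing the resulting cross-brackets inside $\aut_{S^1}(\bar N)$.

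For the $\aut(M)$-factor, the moment map of Proposition~\ref{prop:moment_map} is equivariant, so \cite[Theorem~3.8]{CST21} yields that $X\longmapsto Y_X^\rmQ$ is an injective Lie algebra homomorphism, producing a subalgebra of $\aut_{S^1}(\bar N)$ isomorphic to $\aut(M)$. For the abelian factor $\R^{2n}$, the moment map~\eqref{eq:mu_R2n} fails to be equivariant in a controlled way: $\omega_\hor(\pd{}{p_i},\pd{}{p_j})=\omega^{ij}$ while the structure constants vanish. By \cite[Theorem~3.8]{CST21} this produces the brackets $[(\pd{}{p_i})^\rmQ,(\pd{}{p_j})^\rmQ]=\omega^{ij}Z_{\bar N}$, and since $Z_{\bar N}$ is central in $\aut_{S^1}(\bar N)$, the $\omega^{ij}$ are constant in special real coordinates, and the matrix $(\omega^{ij})$ is non-degenerate, the resulting Lie algebra is precisely the Heisenberg algebra $\heis_{2n+1}$.

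To glue the two subalgebras together, I would compute the cross-brackets $[Y_\alpha^\rmQ,(\pd{}{p_k})^\rmQ]$. The coordinate identity $\omega_\hor(Y_\alpha,\pd{}{p_k})=\pd{X_\alpha^k}{q^j}\mu_{\pd{}{p_j}}$ displayed above (which ultimately uses $\calL_X\omega=0$, equivalently the antisymmetry of $\pd{X_\alpha^i}{q^j}$ in $i,j$) makes the central correction in \cite[Theorem~3.8]{CST21} cancel, leaving $\pd{X_\alpha^k}{q^j}(\pd{}{p_j})^\rmQ\in\heis_{2n+1}$. Combined with the centrality of $Z_{\bar N}$, this shows that $\heis_{2n+1}$ is an ideal inside the Lie algebra generated by the two images, yielding the claimed semidirect product $\aut(M)\ltimes\heis_{2n+1}$ as a subalgebra of $\aut_{S^1}(\bar N)$.

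The delicate step is the identification of the central extension of $\R^{2n}$ produced by the non-equivariance with the genuine Heisenberg algebra: one must use both the constancy of $\omega^{ij}$ in special real coordinates (so that the central term is literally $\omega^{ij}Z_{\bar N}$, rather than a non-constant function multiple of $Z_{\bar N}$) and the non-degeneracy of $(\omega^{ij})$, which upgrades the resulting two-step nilpotent central extension to the standard $\heis_{2n+1}$. Everything else is direct bookkeeping built on \cite[Theorem~3.8]{CST21} and the coordinate computations already displayed.
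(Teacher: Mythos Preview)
Your proposal is correct and follows essentially the same route as the paper: the proposition is stated there as a summary of precisely the argument you outline, applying \cite[Theorem~3.8]{CST21} first to the equivariant $\aut(M)$-moment map, then to the non-equivariant $\R^{2n}$-moment map to obtain $\heis_{2n+1}$, and finally checking that the cross-bracket correction term $\operatorname{tw}(\omega_\hor(Y_\alpha,\partial_{p_k})-\partial_{q^j}X_\alpha^k\,\mu_{\partial_{p_j}})$ vanishes. Your explicit remark that the constancy and non-degeneracy of $\omega^{ij}$ are what pin down the central extension as the genuine Heisenberg algebra is a useful clarification that the paper leaves implicit.
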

 
    \subsection{Global description}
 
    We would like to construct a global counterpart of the above construction, by lifting the action of the group $\Aut(M)\ltimes\R^{2n}$ on $N$ to an action of the semidirect product $\Aut(M)\ltimes\Heis_{2n+1}$ on $P$ and then projecting to $\bar{N}$ directly, without passing to the generating vector fields and having to integrate them as intermediate steps. Here the product in the Heisenberg group $\Heis_{2n+1}$ is realized on the product manifold $\R^{2n}\times \R$, where $\R^{2n}\cong\Gamma_\nabla(T^*M)$, $$(\alpha_1 , \tau_1 )\cdot (\alpha_2 , \tau_2 ) = (\alpha_1 + \alpha_2 ,\tau_1+\tau_2 + \tfrac12 \omega^{-1}(\alpha_1, \alpha_2 )),$$ where the constant function $\omega^{-1}(\alpha_1, \alpha_2 )$ is identified with a number.\medskip
    
    Let us consider $h\in\Aut(M)$ and $(\alpha,\tau)\in\Heis_{2n+1}$. Then we have the following group action of $\Aut(M)\ltimes \Heis_{2n+1}$ on the trivial circle bundle $P$ over $N=T^*M$: \begin{equation}\label{eq:action_P}
		\begin{split}
			h\cdot(\beta,s)&=(h_*\beta,s),\\
			(\alpha,\tau)\cdot(\beta,s)&=(\alpha(p)+\beta,s+[\tau+\tfrac{1}{2}\omega^{-1}(\alpha(p),\beta)]),
		\end{split}
	\end{equation} where $s\in S^1=\R/2\pi \Z$, $\beta\in T_p^*M$, $p\in M$ and $[r] = r \pmod{2\pi}$.\medskip

	This action covers the action of $\Aut(M)\ltimes\R^{2n}$ on $N$ by means of the quotient homomorphism $\Heis_{2n+1}\longrightarrow\R^{2n}\cong\Heis_{2n+1}/\R$. Moreover, it induces the infinitesimal action on $\bar N$ from the previous section.
 
    \begin{proposition}\label{prop:inf_group_action}
		The infinitesimal group action on $P$ corresponding to \eqref{eq:action_P} descends to the action of $\aut(M)\ltimes\heis_{2n+1}$ on $\bar N$ as described above.
    \end{proposition}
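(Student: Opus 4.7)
The plan is to differentiate the group action \eqref{eq:action_P} at the identity to obtain a Lie algebra of vector fields on $P$, and then show that their projections under $\d\pi_{\bar N}\colon TP\to T\bar N$ coincide with the vector fields $V_\rmQ$ built in the previous subsection by elementary modification and twisting. Since the kernel of $\d\pi_{\bar N}$ is pointwise spanned by $Z_P=\tilde Z+f_\hor^c X_P$, the identification reduces to showing, for each infinitesimal generator $\hat V$ on $P$ coming from \eqref{eq:action_P}, that the difference $\hat V-\tilde V_\hor$ is a function multiple of $Z_P$. Concretely, I expect this multiple to be $\mu_V/f_\hor^c$ in each case, so that $\hat V=\tilde V_\hor+(\mu_V/f_\hor^c)Z_P=\tilde V+\mu_V X_P$; the right-hand side is the standard connection-preserving lift of $V$ to $P$, which is the unique lift whose projection to $\bar N$ equals $V_\rmQ=\d\pi_{\bar N}(\tilde V_\hor)$.

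To carry out the verification, I would first compute the three families of infinitesimal generators of \eqref{eq:action_P}. The $\Aut(M)$-factor yields $\hat Y=(Y,0)\in\Gamma(TP)$ for $X\in\aut(M)$ with canonical lift $Y\in\Gamma(TN)$, that is, $Y$ regarded trivially as a vector field on $P=N\times S^1$. The $\R^{2n}\subset\Heis_{2n+1}$-factor gives, for a parallel $\alpha\in\Gamma_\nabla(T^*M)$, a generator whose $N$-component is the vertical vector field on $T^*M$ translating each fibre by $\alpha(p)$ and whose $\partial_s$-component is $\tfrac12\omega^{-1}(\alpha,\lambda)$; in canonical coordinates, taking $\alpha=\d q^k$, this reads $\hat V_{\d q^k}=\partial_{p_k}+\tfrac12\omega^{kj}p_j\partial_s$. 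The Heisenberg centre generates $X_P=\partial_s$, and $\d\pi_{\bar N}(X_P)=Z_{\bar N}$ holds by definition of $Z_{\bar N}$. Next I would expand $\tilde V_\hor=\tilde V-(\mu_V/f_\hor^c)\tilde Z$ in the trivialisation $P=N\times S^1$ with connection one-form $\eta=\d s+\pi_N^*\theta$ compatible with \eqref{eq:action_P} (so that $\theta$ is a primitive of $\omega_\hor$ producing the correct cocycle), and compare with $\hat V$ case by case, using the formula for $\mu_Y$ from Proposition~\ref{prop:moment_map}, the expression~\eqref{eq:mu_R2n} for $\mu_{\partial_{p_k}}$, and the coordinate form~\eqref{oH:eq} of $\omega_\hor$.

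The principal obstacle is precisely this coordinate comparison: one must verify that the $\tfrac12$-factor in the Heisenberg cocycle of \eqref{eq:action_P}, the primitive $\theta$ of $\omega_\hor$ implicit in the trivialisation, and the moment-map shift $-(\mu_V/f_\hor^c)Z$ built into $V_\hor$ conspire so that the $\partial_s$-component of $\hat V-\tilde V_\hor$ matches the $\partial_s$-component of $(\mu_V/f_\hor^c)Z_P$; equivalently, this is the statement that the Hamiltonian function for $V$ produced by the group-theoretic central cocycle agrees with the geometric $\omega_\hor$-moment map of Section~3.2. Once this matching is established, the homomorphism property of the descended Lie-algebra action follows at once from the fact that \eqref{eq:action_P} is a genuine group action of $\Aut(M)\ltimes\Heis_{2n+1}$; combined with the bracket computations of the previous subsection, this identifies the infinitesimal action on $\bar N$ with the semidirect product $\aut(M)\ltimes\heis_{2n+1}$ and the central Heisenberg generator with $Z_{\bar N}$, as claimed.
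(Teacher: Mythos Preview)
Your proposal is correct and follows essentially the same approach as the paper: both hinge on the identity $\hat V=\tilde V+\mu_V X_P$ (the connection-preserving lift) and the observation that $\hat V-\tilde V_\hor=(\mu_V/f_\hor^c)Z_P$, so that $\d\pi_{\bar N}(\hat V)=V_\rmQ$. The only cosmetic difference is direction---the paper starts from the abstract lift $\hat V$, fixes the explicit connection $\eta=\d s+\tfrac12\iota_\Xi\omega_\hor$, computes $\eta(Y)=\mu_Y$ and $\eta(v)=\tfrac12\mu_v$ to identify $\hat Y=Y$ and $\hat v=v+\tfrac12\mu_v X_P$, and then integrates to recover \eqref{eq:action_P}---whereas you differentiate \eqref{eq:action_P} first; what you call the ``principal obstacle'' is exactly the computation $\eta(v)=\tfrac12\mu_v$ that the paper carries out directly (and you should double-check your sign in $\hat V_{\d q^k}$, since $\tfrac12\mu_{\partial_{p_k}}=-\tfrac12\omega^{kj}p_j$).
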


    \begin{proof}
		Let $V\in\mathfrak{ham}_{S^1}(N)\subset\Gamma(TN)$ and $\tilde V$ its $\eta$-horizontal lift to $P$. We define a lift of $V$ to $P$ by $$\hat{V}:=\tilde{V}+\varphi X_P\in\Gamma(TP),$$ where $\varphi\in\calC^\infty(P)$ and $X_P=\pd{}{s}$ generates the principal circle action. We require that the lift $\hat V$ preserves the connection $\eta$. The condition $\calL_{\hat V}\eta=0$ then implies that $\varphi=\pi_N^*f$ for some $\omega_\hor$-Hamiltonian function $f\in\calC^\infty(N)$ for $V$. Thus $\hat V=\tilde V+\pi_N^*fX_P$. Such lift automatically commutes with $X_P$. We will omit the pullbacks in the notation from now on.\medskip
		
		If we choose $f=\mu_V$, the canonical Hamiltonian, we recover the procedure described in the previous section. Indeed, note that $$\tilde{V}_\hor-\hat V=-\frac{\mu_V}{f_\hor^c}Z_P,$$ where $Z_P=\tilde{Z}+f_\hor^cX_P$. Since $\d\pi_{\bar N}(Z_P)=0$, by definition of $\bar N$, we get $V_\rmQ=\d\pi_{\bar N}(\hat V)$. This means that we obtain the same Killing vector field on $\bar N$ projecting the lift $\hat V$ or twisting the elementary deformation $V_\hor$.\medskip
		
		Now let us work out explicitly what the infinitesimal lift $\hat{V}$ looks like. Since $P$ is trivial, we may regard any vector field on $N$ as a vector field on $P$ which is horizontal with respect to the product structure (or equivalently with respect to the trivial connection $\d s$, with $s$ a local coordinate on $S^1$). Thus, we may write the $\eta$-horizontal lift $\tilde{V}$ as $$\tilde{V}=V-\eta(V)X_P,$$ and similarly \begin{equation}\label{eq:V_hat}
            \hat{V}=\tilde{V}+\mu_VX_P=V-(\eta(V)-\mu_V)X_P.
        \end{equation}
		
		A canonical choice of connection $\eta$ with curvature $\omega_\hor$ is given by $$\eta=\d s+\frac{1}{2}\iota_\Xi\omega_\hor,$$ where $\Xi\in\Gamma(TN)$ is the vector field expressed in coordinates by $\Xi=q^j\pd{}{q^j}+p_j\pd{}{p_j}$, thus $$\eta=\d s+\frac{1}{2}(-\omega_{ij}q^i\d q^j+\omega^{ij}p_i\d p_j).$$
		
		Recall that a canonically lifted automorphism of $M$ takes the form $$Y=X^j\pd{}{p_j}-\pd{X^i}{q^j}p_i\pd{}{p_j}\in\mathfrak{ham}_{S^1}(N)$$ with canonical Hamiltonian function (see Proposition~\ref{prop:moment_map}) $$\mu_Y=\frac{1}{2}\left(-\omega_{ij}q^iX^j+\omega^{jk}p_ip_k\pd{X^i}{q^j}\right).$$
		
		Using these expressions, we find $\eta(Y)=\frac{1}{2}\iota_Y\iota_\Xi\omega_\hor=\mu_Y$. The upshot is that $\hat{Y}=Y$ (see \eqref{eq:V_hat}), i.e.\ the lifted action of $\Aut(M)$ to $P=N\times S^1$ is trivial on the $S^1$-factor and it corresponds to the action of $\Aut(M)$ described in the first equation of \eqref{eq:action_P}.\medskip
		
		Finally, we consider the lift of the group $\R^{2n}$. For an element of its Lie algebra $v\in\R^{2n}$ we have the local expression $v=v_k\pd{}{p_k}$ and corresponding moment map $\mu_v=-\omega^{kj}v_kp_j$. This time, we find $\eta(v)=\frac{1}{2}\mu_v$, and consequently $\hat{v}=v+\frac{1}{2}\mu_vX_P$. These vector fields do not induce an action of $\R^{2n}$ since they no longer commute. Indeed, we have \begin{align*}
			[\hat{v},\hat{w}]&=\frac{1}{2}(v(\mu_w)-w(\mu_v))X_P=\frac{1}{2}(\d\mu_w(v)-\d\mu_v(w))X_P\\
			&=\frac{1}{2}(-(\iota_w\omega_\hor)(v)+(\iota_v\omega_\hor)(w))=\omega_\hor(v,w)X_P,
		\end{align*} or, in local coordinates, $[\hat{v},\hat{w}]=\omega^{ij}v_iw_j\pd{}{s}$. Since $\omega^{ij}$ is constant and $\pd{}{s}$ is central, the conclusion is that we are now dealing with an infinitesimal action of a one-dimensional central extension of $\R^{2n}$ whose non-trivial commutators are given by a symplectic form, i.e.\ a Heisenberg algebra $\heis_{2n+1}$. Integrating, we obtain the action of $\Heis_{2n+1}$ described in the second line of \eqref{eq:action_P}.
    \end{proof}
    
    It is possible to describe the quaternionic K\"ahler manifold $\bar N$ as a submanifold of the circle bundle $P$. For that we define the following tensor fields on $P$\footnote{We are using a convention different from \cite{ACDM15}. The dictionary is given by $Z=-\frac{1}{2}Z^{\mathrm{ACDM}}$, $f_Z^c=-\frac{1}{2}f^{\mathrm{ACDM}}$ and $f_\hor^c=-\frac{1}{2}f_1^{\mathrm{ACDM}}$.}: $$g_P:=-\frac{1}{f_\hor^c}\eta^2+\pi_N^*g_N,\quad\theta_0^P:=\d f_Z^c,\quad\theta_1^P:=\eta-\iota_Zg_N,\quad\theta_2^P:=-\iota_Z\omega_3,\quad\theta_3^P:=\iota_Z\omega_2.$$
    
    With them, define the $Z_P$-invariant tensor field$$\tilde{g}_P:=g_P+\frac{1}{f_Z^c}\sum_{j=0}^3(\theta_j^P)^2,$$ where recall that $Z_P=\tilde{Z}+f_\hor^c X_P$, and consider $$g_{\bar N}^c:=\frac{1}{4\abs{f_Z^c}}\tilde{g}_P|_{\bar N},$$ where \begin{equation}\label{eq:barN_in_P}
        \bar N:=\{\arg(X^0)=0\}\subset P=N\times S^1
    \end{equation} is a codimension one submanifold of $P$ which is transversal to the vector field $Z_P$ and $(X^0,\ldots,X^{n-1})$ are special holomorphic coordinates of the CASK manifold $M$. Then, by \cite[Theorems~2 and 5]{ACDM15}, $(\bar N,g_{\bar N}^c)$ is precisely the one-loop deformed c-map space.\medskip
    
    To state Theorem~\ref{thm:effective_action}, we have to focus on a particular class of CASK manifolds, namely those coming from projective special real manifolds $\calH$. We now briefly describe them. We follow the notation and conventions of \cite{CT22}. For more details, see for instance \cite{CHM12,CDJL21}.
    
    \begin{definition}
		A \emph{projective special real (PSR) manifold} is a Riemannian manifold $(\calH,g_{\calH})$ such that $\calH\subset\R^{n-1}$ is a hypersurface and there is a homogeneous cubic polynomial $h:\R^{n-1}\longrightarrow\R$ satisfying: \begin{itemize}
			\itemsep 0em
			\item $\calH\subset\{t\in\R^{n-1}\mid h(t)=1\}$.
			\item $g_\calH=-\partial^2h|_{T\calH\times T\calH}$.
		\end{itemize}
    \end{definition}
    
    We denote the coordinates of $\R^{n-1}$ by $t^a$. In particular we write the cubic polynomial $h$ as $$h(t^a)=\frac{1}{6}k_{abc}t^at^bt^c,$$ where the coefficients $k_{acb}\in\R$ are symmetric in the indices.\medskip
 
    Let $(\calH,g_\calH)$ be a PSR manifold defined by the real cubic polynomial $h$, and let $U:=\R_{>0}\cdot\calH\subset\R^{n-1}\setminus\{0\}$. We define $\bar{M}:=\R^{n-1}+iU\subset\C^{n-1}$ with the canonical holomorphic structure, where the global holomorphic coordinates are given by $z^a:=b^a+it^a\in\R^{n-1}+iU$. On $\bar{M}$ we consider the metric $$\bar{g}=\frac{\partial^2\calK}{\partial z^a\partial\bar{z}^b}\d z^a\d\bar{z}^b,$$ where $\calK:=-\log K(t)$ and $K(t):=8h(t)=\frac{4}{3}k_{abc}t^at^bt^c$. Then it can be shown that $(\bar{M},\bar{g})$ is a PSK manifold \cite{CHM12}. In fact, one can find the following explicit expression for a PSK metric coming from a PSR manifold: $$\bar g=-\frac{1}{4}\frac{\partial^2\log h(t)}{\partial t^a\partial t^b}(\d b^a\d b^b+\d t^a\d t^b)=\left(-\frac{k_{abc}t^c}{4h(t)}+\frac{k_{acd}k_{bef}t^ct^dt^et^f}{(4h(t))^2}\right)(\d b^a\d b^b+\d t^a\d t^b).$$

    \begin{definition}
        The construction of the PSK manifold $(\bar M,\bar g)$ from the PSR manifold $(\calH,g_\calH)$ as explained above is known as the \emph{supergravity r-map}.
    \end{definition}
    
    Given a PSK manifold $\bar M$ in the image of the supergravity r-map, its corresponding CASK manifold is defined via the CASK domain $(M,\mathfrak{F})$, where $M\subset\C^n$ is given by $$M:=\{(X^0,\ldots,X^{n-1})=X^0\cdot(1,z)\in\C^n\mid X^0\in\C^*,z\in\bar{M}\}$$ and $$\mathfrak{F}(X)=-\frac{h(X^1,\ldots,X^{n-1})}{X^0}=-\frac{1}{6}k_{abc}\frac{X^aX^bX^c}{X^0}.$$
    
    We know recall what is a CASK domain and how to obtain a CASK manifold from it.
    
    \begin{definition}
		A \emph{CASK domain} is a pair $(M,\mathfrak{F})$ such that: \begin{itemize}
			\itemsep 0em
			\item $M\subset\C^n\setminus\{0\}$ is a $\C^*$-invariant domain (with respect to the usual $\C^*$-action on $\C^n\setminus\{0\}$ by multiplication).
			\item $\mathfrak{F}:M\longrightarrow\C$ is a holomorphic function, homogeneous of degree 2 with respect to the $\C^*$-action, called the \emph{holomorphic prepotential}.
			\item With respect to the natural holomorphic coordinates $(X^0,\ldots,X^{n-1})$ of $M$, the matrix $$(\tau_{ij}):=\left(\frac{\partial^2\mathfrak{F}}{\partial X^i\partial X^j}\right)$$ satisfies that $\Im(\tau_{ij})$ has signature $(n-1,1)$ and $\Im(\tau_{ij})X^i\bar{X}^j<0$ for $X\in M$.
		\end{itemize}
	\end{definition}

	From this data, we obtain a CASK manifold $(M,g,\omega,\nabla,\xi)$ by $$g=\Im(\tau_{ij})\d X^i\d\bar{X}^j,\quad\omega=\frac{i}{2}\Im(\tau_{ij})\d X^i\wedge\d\bar{X}^j,\quad\xi=X^i\pd{}{X^i}+\bar{X}^i\pd{}{\bar{X}^i}.$$
 
    The flat connection $\nabla$ is defined such that \begin{equation}\label{eq:flat_frame}
	    \d x^i:=\Re(\d X^i)\quad\text{and}\quad\d y_i:=-\Re(\d(\pd{\mathfrak{F}}{X^i}))
     \end{equation} is a flat frame of $T^*M$.\medskip
     
     Note that for a CASK manifold $M$ determined by $\calH$ as explained above, the functions $x^i:=\Re(X^i)$ and $y_i:=-\Re(\pd{\mathfrak{F}}{X^i})$ are globally defined and their differentials \eqref{eq:flat_frame} give rise to a parallel frame of $T^*M$, hence $\mathrm{Hol}(\nabla)$ is trivial and therefore the action of $\R^{2n}$ given by Proposition~\ref{translations:prop} is global.\medskip
     
     In this situation, where the CASK manifold $M$ is determined by a PSR manifold $\calH$, a group of isometries preserving the CASK structure was described in \cite[Appendix~A]{CDJL21}. More precisely, the group we are considering is $$\mathrm{Aff}_{\calH}(\R^{n-1}):=(\R_{>0}\times\Aut(\calH))\ltimes\R^{n-1}\hookrightarrow\Aut(M)\subset\Sp(\R^{2n}),$$ where $$\Aut(\calH):=\{A\in\GL(n-1,\R)\mid A\calH=\calH\}$$ and the arrow is a certain embedding \cite[Proposition~23]{CDJL21}.
    
    \begin{theorem}\label{thm:effective_action}
        Let $M$ be a CASK manifold determined by the PSR manifold $\calH$ as explained above. Then the group $\mathrm{Aff}_{\calH}(\R^{n-1})\ltimes(\Heis_{2n+1}/\calF)$, where $\calF$ is an infinite cyclic subgroup of the Heisenberg center, acts effectively and isometrically on $(\bar N,g_{\bar N}^c)$ for $c\geq0$.
    \end{theorem}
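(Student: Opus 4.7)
The plan is to combine the global action of $\Aut(M)\ltimes\Heis_{2n+1}$ on the circle bundle $P$ given by~\eqref{eq:action_P} with the embedding $\mathrm{Aff}_{\calH}(\R^{n-1})\hookrightarrow\Aut(M)$ from \cite[Proposition~23]{CDJL21}, and then restrict the resulting $\mathrm{Aff}_{\calH}(\R^{n-1})\ltimes\Heis_{2n+1}$-action on $P$ to the hypersurface $\bar N=\{\arg(X^0)=0\}\subset P$ described in~\eqref{eq:barN_in_P}. Three points must be checked: (i) the subgroup preserves $\bar N$ as a set; (ii) the restricted action is isometric for $g_{\bar N}^c$ for every $c\geq 0$; and (iii) the kernel of this action on $\bar N$ equals the infinite cyclic subgroup $\calF$ of the Heisenberg center.

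For (i), the Heisenberg factor acts trivially on the base $M$ (it only moves the fiber of $T^*M$ and the $S^1$-coordinate), so it leaves $\arg(X^0)$ untouched. For $\mathrm{Aff}_{\calH}(\R^{n-1})$ I would check each generating piece separately: the $\R_{>0}$-part scales $X^0$ by a positive real factor; $\Aut(\calH)\subset\GL(n-1,\R)$ leaves $X^0$ fixed while acting on $X^1,\dots,X^{n-1}$; and the translations shift only $\Re(X^a)=b^a$ for $a\geq1$. In every case $\{X^0\in\R_{>0}\}$ is preserved. For (ii), the action on $N$ preserves the rigid c-map structure by Propositions~\ref{translations:prop} and~\ref{prop:semidirect_Aut-R2n} (using trivial $\nabla$-holonomy on $M$), and the lift to $P$ constructed in Proposition~\ref{prop:inf_group_action} preserves the connection $\eta$ and commutes with $X_P$. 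Consequently all ingredients of $\tilde g_P=g_P+\frac{1}{f_Z^c}\sum_{j=0}^3(\theta_j^P)^2$, namely $g_P$, $f_Z^c$, $\eta$, $\iota_Zg_N$, $\iota_Z\omega_2$ and $\iota_Z\omega_3$, are preserved, hence so is $\tilde g_P$ and therefore also $g_{\bar N}^c=\frac{1}{4|f_Z^c|}\tilde g_P|_{\bar N}$, for every $c\geq0$.

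The main work lies in (iii). For $(\alpha,\tau)\in\Heis_{2n+1}$ to act trivially on $\bar N$, equation~\eqref{eq:action_P} forces $\alpha(p)=0$ for every $p$ in the corresponding slice of $M$; since $\alpha$ is a $\nabla$-parallel section this gives $\alpha=0$, and the requirement $s+[\tau]=s$ modulo $2\pi$ then yields $\tau\in 2\pi\Z$. Hence the Heisenberg kernel is precisely $\calF:=\{0\}\times 2\pi\Z$, an infinite cyclic subgroup of the center. Normality of $\calF$ in the full semidirect product follows from the conjugation identity $h(\alpha,\tau)h^{-1}=(h_*\alpha,\tau)$, which uses that $h\in\Aut(M)$ preserves $\omega$ and $\nabla$. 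An element $h\in\mathrm{Aff}_{\calH}(\R^{n-1})$ acting trivially on $\bar N$ acts trivially on the slice $\{X^0\in\R_{>0}\}\subset M$ sitting in $\bar N$ via the zero section and $s=0$; since this slice fibers over the PSK manifold $\bar M$ on which $\mathrm{Aff}_{\calH}(\R^{n-1})$ is faithfully represented by construction, $h$ must be trivial. A final case analysis using the semidirect product structure rules out mixed kernel elements, so the kernel on $\bar N$ is exactly $\calF$, yielding an effective action of $\mathrm{Aff}_{\calH}(\R^{n-1})\ltimes(\Heis_{2n+1}/\calF)$.

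The main obstacle I anticipate is step (iii), specifically the careful case analysis eliminating mixed elements of the semidirect product from the kernel; the isometry and submanifold-preservation claims reduce to routine verifications once the invariance of the building blocks of $\tilde g_P$ is in hand.
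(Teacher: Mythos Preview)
Your proposal is correct and follows the same approach as the paper: verify that $\mathrm{Aff}_{\calH}(\R^{n-1})\ltimes\Heis_{2n+1}$ preserves the hypersurface $\bar N=\{\arg(X^0)=0\}\subset P$ (the paper simply cites the explicit description in \cite[Appendix~A]{CDJL21} to see that $X^0$ is only rescaled by a positive real factor), note that the action is isometric ``by construction'' since all the tensorial ingredients of $\tilde g_P$ are preserved, and identify the kernel as the infinite cyclic group $\calF=\{0\}\times 2\pi\Z$ inside the Heisenberg center. The paper's proof is considerably terser than yours---it does not spell out the invariance of each $\theta_j^P$ and does not discuss mixed kernel elements at all---but your more detailed version covers exactly the same ground (and the mixed-element case you flag as the main obstacle is in fact immediate, since projecting the action to $M$ forces the $\mathrm{Aff}_{\calH}(\R^{n-1})$-component of any kernel element to act trivially on $\bar M$, hence to be the identity).
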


    \begin{proof}
        From the explicit description of the action of $\mathrm{Aff}_{\calH}(\R^{n-1})$ on $M$ given in \cite[Appendix~A]{CDJL21} we see that the function $X^0\in\calC^\infty(P)$ changes under this action only by a real positive factor. It follows that the submanifold $\bar N\subset P$ given by \eqref{eq:barN_in_P} is invariant under the action of $\mathrm{Aff}_{\calH}(\R^{n-1})\ltimes\Heis_{2n+1}$, since the action of $\Heis_{2n+1}$ preserves the fibers of $\pi\circ\pi_N:P\longrightarrow M$, where $\pi:N=T^*M\longrightarrow M$ and $\pi_N:P\longrightarrow N$. Hence the group action on $P$ described in \eqref{eq:action_P} restricts to $\bar N$. From the explicit description, it is also easy to check that the action of $\mathrm{Aff}_{\calH}(\R^{n-1})$ is effective on $\bar N\subset P$.\medskip
        
        Finally, the action of $\Heis_{2n+1}$ is not quite effective, since its center acts by translations along the $S^1$-factor of $P$. To obtain an effective action we need to divide out the infinite cyclic subgroup of the center, whose elements correspond to shifting $s\in S^1=\R/2\pi\Z$ by $2\pi k$, $k\in\Z$.\medskip

        The group $\mathrm{Aff}_{\calH}(\R^{n-1})\ltimes(\Heis_{2n+1}/\calF)$ acts by isometries on $\bar N$ by construction.
    \end{proof}

    \begin{remark}
        In \cite[Theorem~3.16]{CRT21} a similar result as in Theorem~\ref{thm:effective_action} is obtained in the case of the CASK manifold $M$ determined by the PSK manifold $\bar M=\CH^{n-1}$ with the transitive action of $\Aut(M)=\SU(1,n-1)$. This example does not belong to the above series of spaces determined by a PSR manifold $\calH$ since $\CH^{n-1}$ is not in the image of the supergravity r-map. They obtain that $\SU(1,n-1)\ltimes(\Heis_{2n+1}/\calF')$ acts effectively and isometrically on $(\bar N'/\calF',g_{\bar N}^c)$, where $\bar{N}'\cong\R^{4n}$ is the universal covering of our $\bar N$, and $\calF'$ is trivial if $c=0$ and infinite cyclic for $c>0$. In the latter case $\bar{N}'/\calF'=\bar N$.
    \end{remark}
    
    \bibliographystyle{amsalpha}
    \bibliography{biblio}

\providecommand{\bysame}{\leavevmode\hbox to3em{\hrulefill}\thinspace}
\providecommand{\MR}{\relax\ifhmode\unskip\space\fi MR }
% \MRhref is called by the amsart/book/proc definition of \MR.
\providecommand{\MRhref}[2]{%
  \href{http://www.ams.org/mathscinet-getitem?mr=#1}{#2}
}
\providecommand{\href}[2]{#2}
\begin{thebibliography}{dWVVP93}

\bibitem[ACD02]{ACD02}
D.~V. Alekseevsky, V.~Cortés, and C.~Devchand, \emph{Special complex
  manifolds}, J. Geom. Phys. \textbf{42} (2002), 85--105,
  \doi{10.1016/S0393-0440(01)00078-X}.

\bibitem[ACDM15]{ACDM15}
D.~V. Alekseevsky, V.~Cortés, M.~Dyckmanns, and T.~Mohaupt, \emph{Quaternionic
  {K}ähler metrics associated with special {K}ähler manifolds}, J. Geom.
  Phys. \textbf{92} (2015), 271--287, \doi{10.1016/j.geomphys.2014.12.012}.

\bibitem[ACM13]{ACM13}
D.~V. Alekseevsky, V.~Cortés, and T.~Mohaupt, \emph{Conification of {K}ähler
  and hyper-{K}ähler manifolds}, Commun. Math. Phys. \textbf{324} (2013),
  no.~2, 637--655, \doi{10.1007/s00220-013-1812-0}.

\bibitem[Ale75]{Ale75}
D.~V. Alekseevsky, \emph{Classification of quaternionic spaces with a
  transitive solvable group of motions}, Izv. Akad. Nauk SSSR Ser. Mat.
  \textbf{39} (1975), no.~2, 315--362, \doi{10.1070/im1975v009n02abeh001479}.

\bibitem[BL23]{BL23}
C.~Böhm and R.~A. Lafuente, \emph{Non-compact {E}instein manifolds with
  symmetry}, J. Amer. Math. Soc. \textbf{36} (2023), no.~3, 591--651,
  \doi{10.1090/jams/1022}.

\bibitem[CDJL21]{CDJL21}
V.~Cortés, M.~Dyckmanns, M.~Jüngling, and D.~Lindemann, \emph{A class of
  cubic hypersurfaces and quaternionic {K}ähler manifolds of co-homogeneity
  one}, Asian J. Math. \textbf{25} (2021), no.~1, 1--30,
  \doi{10.4310/AJM.2021.v25.n1.a1}.

\bibitem[CDS17]{CDS17}
V.~Cortés, M.~Dyckmanns, and S.~Suhr, \emph{Completeness of projective special
  {K}ähler and quaternionic {K}ähler manifolds}, Special metrics and group
  actions in geometry, Springer INdAM series, vol.~23, Springer, Cham, 2017,
  \doi{10.1007/978-3-319-67519-0\_4}, pp.~81--106.

\bibitem[CFG89]{CFG89}
S.~Cecotti, S.~Ferrara, and L.~Girardello, \emph{Geometry of type {II}
  superstrings and the moduli of superconformal field theories}, Int. J. Mod.
  Phys. A \textbf{4} (1989), no.~10, 2475--2529,
  \doi{10.1142/S0217751X89000972}.

\bibitem[CGS23]{CGS23}
V.~Cortés, A.~{Gil-García}, and A.~Saha, \emph{A class of locally
  inhomogeneous complete quaternionic {K}ähler manifolds}, Commun. Math. Phys.
  \textbf{403} (2023), no.~3, 1611--1626, \doi{10.1007/s00220-023-04830-6}.

\bibitem[CHM12]{CHM12}
V.~Cortés, X.~Han, and T.~Mohaupt, \emph{Completeness in supergravity
  constructions}, Commun. Math. Phys. \textbf{311} (2012), no.~1, 191--213,
  \doi{10.1007/s00220-012-1443-x}.

\bibitem[CM09]{CM09}
V.~Cortés and T.~Mohaupt, \emph{Special geometry of {E}uclidean supersymmetry
  {III}: the local r-map, instantons and black holes}, J. High Energy Phys.
  \textbf{2009} (2009), no.~7, 066, \doi{10.1088/1126-6708/2009/07/066}.

\bibitem[Cor96]{Cor96}
V.~Cortés, \emph{Alekseevskian spaces}, Differential Geom. Appl. \textbf{6}
  (1996), no.~2, 129--168, \doi{https://doi.org/10.1016/0926-2245(96)89146-7}.

\bibitem[CRT21]{CRT21}
V.~Cortés, M.~Röser, and D.~Thung, \emph{Complete quaternionic {K}ähler
  manifolds with finite volume ends}, \url{https://arxiv.org/abs/2105.00727},
  2021.

\bibitem[CST21]{CST21}
V.~Cortés, A.~Saha, and D.~Thung, \emph{Symmetries of quaternionic {K}ähler
  manifolds with {$S^1$}-symmetry}, Trans. London Math. Soc. \textbf{8} (2021),
  no.~1, 95--119, \doi{10.1112/tlm3.12026}.

\bibitem[CST22]{CST22}
\bysame, \emph{Curvature of quaternionic {K}ähler manifolds with
  {$S^1$}-symmetry}, Manuscripta Math. \textbf{168} (2022), 35--64,
  \doi{10.1007/s00229-021-01294-7}.

\bibitem[CT22]{CT22}
V.~Cortés and I.~Tulli, \emph{S-duality and the universal isometries of q-map
  spaces}, Commun. Math. Phys. \textbf{394} (2022), no.~2, 833--885,
  \doi{10.1007/s00220-022-04413-x}.

\bibitem[dWVP92]{dWVP92}
B.~de~Wit and A.~Van~Proeyen, \emph{Special geometry, cubic polynomials and
  homogeneous quaternionic spaces}, Commun. Math. Phys. \textbf{149} (1992),
  no.~2, 307--333, \doi{10.1007/BF02097627}.

\bibitem[dWVP95]{dWVP95}
\bysame, \emph{Isometries of special manifolds}, Proceedings of the Meeting on
  Quaternionic Structures in Mathematics and Physics, Trieste, September 1994
  (1995), 92--118, \url{https://www.emis.de/proceedings/QSMP94/}.

\bibitem[dWVVP93]{dWVVP93}
B.~de~Wit, F.~Vanderseypen, and A.~Van~Proeyen, \emph{Symmetry structure of
  special geometries}, Nucl. Phys. B \textbf{400} (1993), no.~1-3, 463--521,
  \doi{https://doi.org/10.1016/0550-3213(93)90413-J}.

\bibitem[FS90]{FS90}
S.~Ferrara and S.~Sabharwal, \emph{Quaternionic manifolds for type {II}
  superstring vacua of {C}alabi-{Y}au spaces}, Nucl. Phys. B \textbf{332}
  (1990), no.~2, 317--332, \doi{10.1016/0550-3213(90)90097-W}.

\bibitem[Hay08]{Hay08}
A.~Haydys, \emph{Hyper{K}ähler and quaternionic {K}ähler manifolds with
  {$S^1$}-symmetries}, J. Geom. Phys. \textbf{58} (2008), no.~3, 293--306,
  \doi{10.1016/j.geomphys.2007.11.004}.

\bibitem[Hit09]{Hit09}
N.~Hitchin, \emph{Quaternionic {K}ähler moduli spaces}, Riemannian topology
  and geometric structures on manifolds, Progress in mathematics, vol. 271,
  Birkhäuser Boston, 2009, \doi{10.1007/978-0-8176-4743-8\_3}, pp.~49--61.

\bibitem[MS15]{MS15}
O.~Macia and A.~Swann, \emph{Twist geometry of the c-map}, Commun. Math. Phys.
  \textbf{336} (2015), no.~3, 1329--1357, \doi{10.1007/s00220-015-2314-z}.

\bibitem[MS22]{MS22}
M.~Mantegazza and A.~Saha, \emph{The c-map as a functor on certain variations
  of {H}odge structure}, Geom. Dedicata \textbf{216} (2022), no.~3, 32 (42 pp),
  \doi{10.1007/s10711-022-00692-9}.

\bibitem[RSV06]{RSV06}
D.~{Robles Llana}, F.~Saueressig, and S.~Vandoren, \emph{String loop corrected
  hypermultiplet moduli spaces}, J. High Energy Phys. \textbf{2006} (2006),
  no.~3, 081, \doi{10.1088/1126-6708/2006/03/081}.

\bibitem[Swa10]{Swa10}
A.~Swann, \emph{Twisting {H}ermitian and hypercomplex geometries}, Duke Math.
  J. \textbf{155} (2010), no.~2, 403--431, \doi{10.1215/00127094-2010-059}.

\end{thebibliography}
    
\end{document}